\newtheorem{theorem}{Theorem}[section]
\newtheorem{lemma}[theorem]{Lemma}
\newtheorem{corollary}[theorem]{Corollary}
\newtheorem{proposition}[theorem]{Proposition}
\theoremstyle{definition}
\theoremstyle{remark}
\newtheorem{remark}[theorem]{Remark}
\numberwithin{equation}{section}
\DeclareMathOperator*{\sn}{sn}
\DeclareMathOperator*{\cn}{cn}
\DeclareMathOperator*{\dn}{dn}
\DeclareMathOperator*{\sign}{sign}
\DeclareMathOperator*{\argmin}{arg\,min}
\newcommand{\uu}{v_{2k-1}}
\newcommand{\ww}{v_{2k}}
\DeclareMathOperator*{\Res}{Res}
\renewcommand{\Re}{\operatorname{Re}}
\renewcommand{\Im}{\operatorname{Im}}
\newcommand{\textoverline}[1]{$\overline{\mbox{#1}}$}
\newenvironment{customproblem}[1]
  {\innercustomproblem}
  {\endinnercustomproblem}
\begin{document}

\title{Zolotarev's fifth and sixth problems}


\author{Evan S. Gawlik}

\address{Department of Mathematics, University of Hawai`i at M\textoverline{a}noa}
\email{egawlik@hawaii.edu}
\thanks{EG was supported by NSF grants DMS-1703719 and DMS-2012427}

\author{Yuji Nakatsukasa}
\address{Mathematical Institute, University of Oxford, and National Institute of Informatics}

\newcommand{\rr}[1]{\textcolor{red}{#1}}
\newcommand{\rrr}[2]{\textcolor{blue}{(#1)}\textnormal{$\leftarrow$}\textcolor{red}{#2}}
\newcommand{\rrrr}[1]{\textcolor{blue}{(#1) \textnormal{$\leftarrow$} (remove)}}

\email{nakatsukasa@maths.ox.ac.uk}
\thanks{}

\subjclass[2020]{Primary 41A20, 30E10, 41A50}

\date{\today}

\dedicatory{}

\begin{abstract}
In an influential 1877 paper, Zolotarev asked and answered four questions about polynomial and rational approximation. We ask and answer two questions:
what are the best rational approximants $r$ and $s$ to $\sqrt{z}$ and $\mbox{sign}(z)$ on the unit circle (excluding certain arcs near the discontinuities), with the property that $|r(z)|=|s(z)|=1$ for $|z|=1$? 
We show that the solutions to these problems are related to Zolotarev's third and fourth problems in a nontrivial manner. 

\end{abstract}

\maketitle


\section{Introduction}

Nearly 150 years ago, Zolotarev asked and answered four questions from approximation theory~\cite{zolotarev}.  The first two concern polynomial approximation.  The third is equivalent to the fourth, and the fourth concerns the approximation of $\sign(x) = x/\sqrt{x^2}$ by rational functions on $[-1,-\ell] \cup [\ell,1]$, where $\ell \in (0,1)$.  His solutions to these problems have had lasting impact in approximation theory~\cite{gonvcar1969zolotarev,todd1984applications,reddy1978certain,elementselliptic} and numerical analysis~\cite{beckermann2017singular,mysirev,gawlik2018zolotarev,le2000optimal,stefanfeast, kressner2017fast,wachspress2013adi,rubin2019bounding}.

In this paper, we ask and answer two questions that are closely related to Zolotarev's fourth problem: what are the best (in the uniform norm) rational approximants $r$ and $s$ to $\sqrt{z}$ and $\mbox{sign}(z) = z/\sqrt{z^2}$ on the unit circle (excluding certain arcs near the discontinuities), with the property that $|r(z)|=|s(z)|=1$ for $|z|=1$?  We derive explicit solutions to these two problems and show that they are related in a nontrivial manner to the solution of Zolotarev's fourth problem.  We also show a remarkable property of these solutions: composing two best rational approximants of $\sign(z)$ on the unit circle yields a best rational approximant of higher degree.  This phenomenon closely mirrors the behavior of best rational approximants of $\sign(x)$ on $[-1,-\ell] \cup [\ell,1]$~\cite{mysirev,bogatyrev2010chebyshev,bogatyrev2012rational}.  Related composition laws for best rational approximants have appeared in other contexts, such as the approximation of the square root and $p$th root on positive real intervals~\cite{gawlik2018zolotarev,gawlik2018pth,gawlik2019approximating} and the solution of certain extremal problems involving finite Blaschke products~\cite{ng2013polynomials,ng2015chebyshev}.  Some other rational approximation problems on the unit circle have been studied in~\cite{lukashov2009extremal,lukashov2012exact}.

Let us give precise statements of the problems that we study, beginning with some notation.  We say that a rational function $r(z)=p(z)/q(z)$ has type $(m,n)$ if $p$ and $q$ are polynomials of degree at most $m$ and $n$, respectively.   
We use $\mathcal{R}_{m,n}$ to denote the set of rational functions of type $(m,n)$ with complex coefficients, and $\mathcal{R}_{m,n}^{\mathrm{real}}$ to denote the set of rational functions of type $(m,n)$ with real coefficients. We say that $r \in \mathcal{R}_{m,n}$ has \emph{exact type} $(\mu,\nu)$ if, after canceling common factors, $p$ and $q$ have degree exactly $\mu$ and $\nu$, respectively.  
For each $\Theta \in (0,\pi/2)$, we let
\begin{align*}
S_\Theta &= \{e^{i\theta} \colon \theta \in [-2\Theta,2\Theta] \}, \\
T_\Theta &= \{e^{i\theta} \colon \theta \in [-\Theta,\Theta] \cup [\pi-\Theta,\pi+\Theta] \}.
\end{align*}
We address the following rational approximation problems. 

\begin{customproblem}{Z5} \label{problemZ5}
Given $\Theta \in (0,\pi/2)$ and $n \in \mathbb{N}_0$, find the rational function in $\{ r \in \mathcal{R}_{n,n} \colon |r(z)|=1 \text{ on } |z|=1\}$ that minimizes
\[
\max_{z \in S_\Theta} \left|\arg\left( \frac{r(z)}{\sqrt{z}} \right)\right|.
\]
\end{customproblem}

\begin{customproblem}{Z6} \label{problemZ6}
Given $\Theta \in (0,\pi/2)$ and $m \in \mathbb{N}_0$, find the rational function in $\{ r \in \mathcal{R}_{m,m} \colon |r(z)|=1 \text{ on } |z|=1\}$ that minimizes
\[
\max_{z \in T_\Theta} \left|\arg\left( \frac{r(z)}{\sign(z)} \right)\right|.
\]
\end{customproblem}

We labeled the above problems ``Z5'' and ``Z6'' since they are natural follow-ups to Zolotarev's fourth problem, which we label ``Z4''.  
Zolotarev's fourth problem reads as follows (up to a trivial scaling).
\begin{customproblem}{Z4} \label{problemZ4}
Given $\ell \in (0,1)$ and $m \in \mathbb{N}_0$, find the rational function $r \in \mathcal{R}_{m,m}^{\mathrm{real}}$ that minimizes
\[
\max_{x \in [-1,-\ell] \cup [\ell,1]} \left|r(x)-\sign(x)\right|.
\]
\end{customproblem}

We will derive explicit solutions to Problems~\ref{problemZ5} and~\ref{problemZ6} in Section~\ref{sec:solutions}. 
As we shall see, the solutions to problems Z4--Z6 are connected in a nontrivial manner.
 Then we will study their properties in Section~\ref{sec:properties}, including their behavior under composition and their error.

\section{Solutions} \label{sec:solutions}

In this section, we derive explicit solutions to Problems~\ref{problemZ5} and~\ref{problemZ6}.  The solutions, summarized in Theorem~\ref{thm:main}, involve Jacobi's elliptic functions.  We use $\mathrm{sn}(\cdot,\ell)$, $\mathrm{cn}(\cdot,\ell)$, and $\mathrm{dn}(\cdot,\ell)$ to denote Jacobi's elliptic functions with modulus $\ell$, and we use $\ell' = \sqrt{1-\ell^2}$ to denote the modulus complementary to $\ell$.  We denote the complete elliptic integral of the first kind by $K(\ell) = \int_0^{\pi/2} (1-\ell^2 \sin^2\theta)^{-1/2} \, d\theta$.  

\begin{theorem} \label{thm:main}
Let $m,n \in \mathbb{N}_0$ and $\Theta \in (0,\pi/2)$.  Problem \ref{problemZ5} has a unique solution given by
\begin{equation} \label{sqrtapprox_ratio}
r(z) = r_n(z;\Theta) = \prod_{j=1}^n \frac{ 1+a_j z }{ z+a_j },
\end{equation}
where
\begin{equation} \label{aj}
a_j = \left( \frac{\ell \sn\left(\frac{2j-1}{2n+1}K(\ell'),\ell'\right) + \dn\left(\frac{2j-1}{2n+1}K(\ell'),\ell'\right)}{\cn\left(\frac{2j-1}{2n+1}K(\ell'),\ell'\right)} \right)^{2(-1)^{j+n}},
\end{equation}
and $\ell = \cos\Theta$.  Problem~\ref{problemZ6} has two solutions: the function
\begin{equation}  \label{eq:sdef}
s(z) = s_m(z;\Theta) = i^{1-m} \prod_{j=1}^{m} \frac{z- i b_j}{1+i b_j z},  
\end{equation}
and its reciprocal, where
\begin{equation} \label{bj}
b_j = (-1)^{mj} \left( \frac{ \ell\sn\left(\frac{2j-1}{m}K(\ell'),\ell'\right) + \dn\left(\frac{2j-1}{m}K(\ell'),\ell'\right) }{ \cn\left(\frac{2j-1}{m}K(\ell'),\ell'\right) } \right)^{(-1)^j}.
\end{equation}
\end{theorem}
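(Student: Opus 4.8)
The plan is to reduce Problems \ref{problemZ5} and \ref{problemZ6} to Zolotarev's fourth problem (Z4) via a sequence of conformal changes of variable, and then to import the known explicit solution of Z4. The key structural observation is that the side constraint $|r(z)| = 1$ on $|z| = 1$ forces $r$ to be a finite Blaschke product (up to a unimodular constant): writing $r = p/q$, the condition means $q(z) = z^n \overline{p(1/\bar z)}$ up to a constant modulus factor, so $r$ has the form $\omega \prod_j \frac{z - \alpha_j}{1 - \bar\alpha_j z}$. For such $r$, the quantity $\arg(r(z)/\sqrt z)$ on the arc $S_\Theta$ is a real-valued function, and the problem becomes a genuine Chebyshev-type problem for that phase.

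First, for Problem \ref{problemZ5}, I would introduce the substitution $z = w^2$ (so $\sqrt z = w$ on the relevant arc) together with a Möbius map sending the arc $S_\Theta$ in the $w$-plane to a real interval of the form $[-1,-\ell]\cup[\ell,1]$ or to $[\ell,1/\ell]$; the choice $\ell = \cos\Theta$ is exactly what makes the endpoints $e^{\pm i\Theta}$ land at $\pm\ell$ (or similar). Under this change of variables a Blaschke product of degree $n$ in $z$ becomes a rational function whose numerator and denominator are related by the symmetry $x \mapsto 1/x$, and minimizing $\max|\arg(r(z)/\sqrt z)|$ transforms, after taking a logarithm (turning the multiplicative Blaschke structure into an additive one and the phase into a real deviation), into minimizing the sup-norm of a rational error against $\sign$, i.e.\ into Z4 with a modulus determined by $\Theta$. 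I would then quote the classical Zolotarev formula for the solution of Z4 — its zeros and poles expressed through $\sn$, $\cn$, $\dn$ at the equioscillation points $\frac{2j-1}{2n+1}K(\ell')$ — and pull everything back through the substitutions to obtain the product formula \eqref{sqrtapprox_ratio} with the $a_j$ as in \eqref{aj}; the exponents $2(-1)^{j+n}$ record whether a given factor became a zero or a pole under the pullback. Uniqueness follows from the uniqueness of the Z4 solution (Zolotarev's equioscillation characterization), transported back through the (invertible) correspondence.

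For Problem \ref{problemZ6}, the reduction is analogous but now built on $\sign(z) = z/\sqrt{z^2}$ and the two-arc set $T_\Theta$, which naturally doubles the relevant interval; here the degree parameter enters as $\frac{2j-1}{m}K(\ell')$ rather than $\frac{2j-1}{2n+1}K(\ell')$, reflecting that the symmetry group of $T_\Theta$ (rotation by $\pi$ on top of the $z\mapsto 1/\bar z$ symmetry) is larger. The rotation-by-$\pi$ symmetry also explains why Problem \ref{problemZ6} has \emph{two} solutions: if $s$ is optimal then so is $1/s$ (equivalently $z \mapsto -s(z)$ composed appropriately), and these are genuinely distinct because the constraint set is symmetric under $r \mapsto 1/r$ while the two are interchanged; one checks $s$ and $1/s$ achieve the same error and that no third extremal exists. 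The factors $\frac{z - ib_j}{1 + ib_j z}$ and the prefactor $i^{1-m}$ are again just the images of the Zolotarev zeros/poles under the relevant Möbius/rotation maps, with the $(-1)^{mj}$ and $(-1)^j$ bookkeeping the zero-versus-pole alternation.

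The main obstacle I anticipate is making the chain of conformal maps genuinely rigorous rather than formal: one must verify that the logarithm used to linearize the Blaschke product is single-valued on the relevant domain (this is where the excluded arcs near the discontinuities are essential — they keep $r(z)/\sqrt z$ away from $0$ and $\infty$), that the transformed problem is \emph{exactly} Z4 and not merely a problem with the same extremal structure (in particular that the degree constraints match on both sides — a type-$(n,n)$ Blaschke product must correspond to a type-$(n,n)$ rational function in Z4, with no spurious degree drop), and that the optimality/uniqueness characterization transports cleanly in both directions. A secondary technical point is checking the boundary cases $m = 0$ or $n = 0$ and confirming that the elliptic-function formulas \eqref{aj}, \eqref{bj} indeed reproduce the classical Zolotarev expressions after the change of variables — this is a direct but somewhat delicate computation with the addition formulas and the quarter-period identities for $\sn, \cn, \dn$.
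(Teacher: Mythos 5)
Your high-level picture---relate the circle problems to Z4 through $w=z^2$ and the Joukowski map $x=\tfrac12(z+z^{-1})$, with $\ell=\cos\Theta$, and get uniqueness from equioscillation---is the right one, but the reduction as you describe it does not go through, and the two places where the real work happens are left open. The phase of a unimodular rational function is not linearized by a logarithm into a rational error: $\arg r(e^{i\theta})$ is not a rational function of any algebraic variable, and the transformed problem is not ``exactly Z4.'' The relation that actually holds (Theorem~\ref{thm:rationalelliptic}) is $s(z)=F(x)+i\operatorname{sign}(\Im z)^m G(x)$ with $x=\tfrac12(z+z^{-1})$, so that $\cos\bigl(\arg s(e^{i\theta})\bigr)=F(\cos\theta)$; the two error measures are related through $\arccos$, not equal. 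More seriously, the correspondence $r\mapsto \Re r$ composed with the inverse Joukowski map is \emph{not} a bijection between the feasible set of Problem~\ref{problemZ6} and $\mathcal{R}^{\mathrm{real}}_{m,m}$: for a unimodular $r\in\mathcal{R}_{m,m}$ one has $\Re r=\tfrac12(r+1/r)$ on the circle, and this descends to a type-$(m,m)$ function of $\cos\theta$ only when $r(1/z)=1/r(z)$, which fails for general complex-coefficient competitors. So optimality and uniqueness cannot be ``transported through the invertible correspondence.'' The paper proves them natively on the circle by zero counting: equioscillation of $\arg(r/\sqrt z)$ at $2n+2$ points forces the numerator of $r-\widetilde r$, of type $(2n,2n)$, to have $2n+1$ zeros. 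For Problem~\ref{problemZ6} this step is genuinely delicate because there are two solutions; ruling out a third requires producing an extra coincidence point with $s$ or $1/s$ \emph{outside} the interior of $T_\Theta$, which your ``one checks that no third extremal exists'' does not address.

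Second, the identification of the explicit products \eqref{sqrtapprox_ratio} and \eqref{eq:sdef}, with the parameters \eqref{aj} and \eqref{bj}, as the functions whose phase equioscillates is the computational core of the proof, and ``pull the Zolotarev zeros and poles back through the Möbius maps'' is not how it works. Naively, $F(\tfrac12(z+z^{-1}))+iG(\tfrac12(z+z^{-1}))$ has type $(4n+2,4n+1)$; the paper shows via the degree-reduction formulas for $\sn(u/M,\lambda)$ and $\dn(u/M,\lambda)$ and an explicit residue computation that half of the ostensible poles cancel (the residues of $\widetilde F$ and $i\widetilde G$ are equal and opposite at the poles $z_{j,p,q}$ with $j+q$ even), which is what cuts the type down to $(2n+1,2n)$ and produces exactly the $a_j$ and $b_j$ with their alternating exponents. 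Without this cancellation your degree count does not close, so the candidate would not even lie in the feasible set $\mathcal{R}_{n,n}$.
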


\begin{remark} \label{remark:oddm}
When $m=2n+1$, the functions $s_m(z;\Theta)$ and $r_n(z;\Theta)$ are related to one another.
Using the observation that
\[
b_j =
\begin{cases}
(-1)^j \sqrt{a_j}, &\mbox{ if $m=2n+1$, $j < n+1$, and $n$ is even}, \\
0, &\mbox{ if $m=2n+1$, $j =n+1$, and $n$ is even}, \\
(-1)^{j+1} \sqrt{a_{2n+2-j}}, &\mbox{ if $m=2n+1$, $j > n+1$, and $n$ is even}, \\
(-1)^j / \sqrt{a_j}, &\mbox{ if $m=2n+1$, $j < n+1$, and $n$ is odd}, \\
\infty, &\mbox{ if $m=2n+1$, $j =n+1$, and $n$ is odd}, \\
(-1)^{j+1} / \sqrt{a_{2n+2-j}}, &\mbox{ if $m=2n+1$, $j > n+1$, and $n$ is odd},
\end{cases}
\]
one checks that 
\begin{equation} \label{signapprox_ratio}
s_{2n+1}(z;\Theta)^{(-1)^n} =  z \prod_{j=1}^n \frac{ z^2+a_j }{ 1+a_j z^2 } = \frac{z}{r_n(z^2;\Theta)}.
\end{equation}
In particular, $s_{2n+1}(z;\Theta)$ has exact type $(2n+1,2n)$ when $n$ is even, and it has exact type $(2n,2n+1)$ when $n$ is odd.
On the other hand, $s_{2n}(z;\Theta)$ has exact type $(2n,2n)$.
\end{remark}

\begin{remark}
For $m,n>1$, neither $s_m(z;\Theta)$ nor $r_n(z;\Theta)$ is a finite Blaschke product, since both functions have at least one root outside the unit disk.
\end{remark}

The following identity will play a central role in our proof of Theorem~\ref{thm:main}.
\begin{theorem} \label{thm:rationalelliptic}
Let $\ell \in (0,1)$ and $m \in \mathbb{N}$.  Let $M = K(\ell)/K(\lambda)$, where $\lambda \in (0,1)$ is determined uniquely by the condition that
\begin{equation} \label{Kratio}
\frac{K(\ell)}{K(\ell')} = \frac{K(\lambda)}{mK(\lambda')}
\end{equation}
holds with $\lambda'=\sqrt{1-\lambda^2}$.  Then the function $s(z)$ in \eqref{eq:sdef} can be expressed as
\begin{equation} \label{rationalelliptic}
s(z) = F(x) + i \sign(\Im z)^m G(x), \quad x = \frac{1}{2}(z+z^{-1}),
\end{equation}
where
\begin{align}
F(x) &= F_m(x;\ell) = \lambda \sn\left( \frac{\sn^{-1}(x/\ell,\ell)}{M}, \lambda \right), \label{zolo} \\
G(x) &= G_m(x;\ell) = \dn \left( \frac{\sn^{-1}(x/\ell,\ell)}{M}, \lambda \right). \label{Gdef}
\end{align}
\end{theorem}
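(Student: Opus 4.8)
The plan is to recognize the right-hand side of \eqref{rationalelliptic} as a degree-$m$ rational function of $z$, identify its zeros and poles, and match them against the explicit product \eqref{eq:sdef}. The starting point is the classical fact underlying Zolotarev's solution to Z4: the composition $\lambda\sn(\sn^{-1}(x/\ell,\ell)/M,\lambda)$, with $M$ and $\lambda$ chosen via the degree-$m$ modular equation \eqref{Kratio}, is a rational function of $x$ of type $(m,m)$ (or $(m-1,m-1)$ when $m$ is even), namely the Zolotarev function $F_m(x;\ell)$. The key substitution is $x=\tfrac12(z+z^{-1})$, which maps the unit circle two-to-one onto $[-1,1]$: under this Joukowski map, $\sn^{-1}(x/\ell,\ell)$ becomes, up to shifts by periods, a function whose dependence on $z$ is single-valued once one tracks the branch via $\sign(\Im z)$. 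First I would make the change of variables precise: writing $z=e^{i\theta}$, we have $x=\cos\theta$, and $\sn^{-1}(\cos\theta/\ell,\ell)$ traces a segment in the complex plane as $\theta$ runs over $[0,\pi]$ and its mirror image as $\theta$ runs over $[-\pi,0]$, which is exactly the origin of the $\sign(\Im z)^m$ factor on $G$. Since $F$ is even in $\sn^{-1}$ (as $\sn$ is odd and we compose with another odd function, giving overall oddness that combines with the even argument structure—this needs to be checked carefully) while $G$ involves $\dn$, which is even, the two pieces assemble into something holomorphic in $z$ on $\mathbb{C}\setminus\{0\}$.

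The second and central step is to locate the zeros and poles of $s(z)$ as given by \eqref{eq:sdef} on one hand, and of $F(x)+i\,G(x)$ (restricting to $\Im z>0$, say) on the other. The product \eqref{eq:sdef} has zeros at $z=ib_j$ and poles at $z=i/(-b_j)=-i/b_j$ for $j=1,\dots,m$. On the elliptic side, $F+iG$ vanishes where $\lambda\sn(u,\lambda)=-i\dn(u,\lambda)$ with $u=\sn^{-1}(x/\ell,\ell)/M$; using the identity $\lambda^2\sn^2+\dn^2\cdot(\text{something})$—more precisely $\dn^2 u = 1-\lambda^2\sn^2 u$, so $\lambda\sn = -i\dn$ forces $\lambda^2\sn^2 = -\dn^2 = -(1-\lambda^2\sn^2)$, hence $\lambda^2\sn^2 u = -(1-\lambda^2\sn^2 u)$... this is inconsistent, so instead the zeros of $F+iG$ on $|z|=1$ must come from the branch structure, i.e. from the poles of $x\mapsto \cos\theta$ analytically continued, or from $x=\ell\sn$ hitting specific lattice points. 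The honest approach here is: the zeros of $s$ occur where $x/\ell$ (equivalently the argument of $\sn^{-1}$) hits the values making $\lambda\sn(\cdot/M,\lambda)+i\dn(\cdot/M,\lambda)=0$; since $\dn$ never vanishes on the real axis but $\sn$ and $\dn$ together satisfy $\dn(u\pm iK(\lambda'),\lambda)$ relations, the zeros sit at arguments $u$ of the form $\pm iK(\lambda')$ shifted by real multiples of $2K(\lambda)/m$, which pull back under $x=\ell\sn(Mu,\ell)$ to precisely the points $x_j$ whose Joukowski preimages are $z=ib_j$. I would compute $\ell\sn\big(M\cdot(\text{real}+iK(\lambda'))\big)$ using the imaginary-argument addition formula for $\sn$, which introduces exactly the combination $(\ell\sn + \dn)/\cn$ appearing in \eqref{bj}.

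The third step is to match multiplicities and the leading constant. Once the zero sets and pole sets agree (as multisets of size $m$ each), $s(z)$ and $F(x)+i\sign(\Im z)^m G(x)$ agree up to a multiplicative constant; evaluating at a convenient point—$z=i$ (where $x=0$, so $F=0$ and $G=1$, giving $i\cdot i^m \cdot 1 = i^{m+1}$... compare with $\eqref{eq:sdef}$ at $z=i$) or $z=1$ (where $x=1$, $\sn^{-1}(1/\ell,\ell)$ needs care since $1/\ell>1$)—fixes the constant as $i^{1-m}$ and pins down the sign convention on $\sign(\Im z)^m$. I would also verify independently that $F(x)+i\sign(\Im z)^m G(x)$ has modulus $1$ on $|z|=1$: for $z=e^{i\theta}$ with $\theta\in(0,\pi)$, $x\in(-1,1)$, and one needs $|F(x)+iG(x)|=1$, i.e. $F(x)^2+G(x)^2=1$; but $F^2+G^2 = \lambda^2\sn^2 + \dn^2 = \lambda^2\sn^2 + 1-\lambda^2\sn^2 = 1$—this is the cleanest check and confirms the definitions of $F$ and $G$ were reverse-engineered from exactly this constraint. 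The main obstacle I anticipate is the branch-tracking in step one and the imaginary-period shift bookkeeping in step two: getting the factors of $i$, the signs $(-1)^{mj}$ and $(-1)^j$ in \eqref{bj}, and the $\sign(\Im z)^m$ correct requires careful use of the quarter-period translation formulas $\sn(u+iK',\lambda)=1/(\lambda\sn u)$, $\dn(u+iK',\lambda)=-i\cn u/\sn u$, together with the half-period formulas, and then pushing these through the degree-$m$ modular relation $x=\ell\sn(Mu,\ell)$ to land on the stated $b_j$. Everything else—matching counts, fixing the constant, verifying $|s|=1$—is then routine.
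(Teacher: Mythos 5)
Your overall architecture is the same as the paper's: interpret the right-hand side of \eqref{rationalelliptic} as a rational function of $z$, match its zeros and poles against the product \eqref{eq:sdef}, fix the unimodular constant by evaluating at special points, and get $|s|=1$ on the circle from $\lambda^2\sn^2+\dn^2=1$. But there is a genuine gap at the central step, and it is exactly the point your own computation flags as ``inconsistent.'' Since $x=\tfrac12(z+z^{-1})$ is a degree-$2$ map, $F(x)+i\sign(\Im z)^m G(x)$ is a priori a rational function of $z$ of type roughly $(2m,2m)$: each pole of $F$ in the $x$-plane has two Joukowski preimages, giving $2m$ candidate poles in $z$ (the points where $\sn^{-1}(x/\ell,\ell)/M$ hits a pole of $\sn(\cdot,\lambda)$, i.e.\ a lattice point $2pK(\lambda)+i(2j-1)K(\lambda')$). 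The theorem asserts type $(m,m)$, so half of these candidates must disappear, and the mechanism --- the crux of the paper's proof --- is a residue comparison: at each candidate pole both $F$ and $i\sign(\Im z)^m G$ have simple poles, and the standard residue formulas for $\sn$ and $\dn$ at these lattice points show the two residues are equal for one parity of $j+q$ and opposite for the other (with $q$ indexing the two Joukowski preimages; the relevant parity shifts to $j+p$ when $m$ is even because the $\sign(\Im z)$ factor drops out). Where the residues cancel the singularity is removable, and then $(F+iG)(F-iG)=1$ forces a \emph{zero} there. This also resolves your puzzle about the zeros: they are not solutions of $\lambda\sn(u,\lambda)=-i\dn(u,\lambda)$ (there are none, as you correctly found), but sit precisely at the cancelled poles. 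Without carrying out this residue computation you cannot decide which of the $2m$ candidates are poles and which are zeros, and it is exactly this parity bookkeeping that produces the alternating exponents $2(-1)^{j+n}$ and $(-1)^j$ and the signs $(-1)^{mj}$ in \eqref{aj} and \eqref{bj}.

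Two smaller points. At $z=i$ one has $x=0$, $F(0)=0$, $G(0)=\dn(0,\lambda)=1$, and $\sign(\Im i)^m=1$, so the right-hand side of \eqref{rationalelliptic} equals $i$, not $i^{m+1}$; likewise the poles of \eqref{eq:sdef} are at $z=i/b_j$, not $-i/b_j$. These slips matter in an argument that is entirely sign-and-branch bookkeeping. Finally, the normalization step is not uniform: for $m=2n+1$ the function carries a factor $z^{\pm1}$ (a simple pole or zero at the origin, the sign of the exponent depending on the parity of $n$, pinned down by bounding the order of the pole at $0$ of both $F+iG$ and its reciprocal and then evaluating at $z=i$), whereas for $m=2n$ there is no such factor and the constant $i^{1-m}$ must be extracted by evaluating at both $z=i$ and $z=-i$, since $z=1$ gives no information in that case.
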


The function $F(x)$ appearing above is none other than Zolotarev's classical solution to Problem \ref{problemZ4} on $[-1,-\ell] \cup [\ell,1]$, scaled to have maximum value 1 on $[\ell,1]$~(\cite{zolotarev},~\cite[Sections 50-51]{elementselliptic}):
\[
\frac{2}{1+\lambda} F = \argmin_{r \in \mathcal{R}_{m,m}^{\mathrm{real}}} \max_{x \in [-1,-\ell] \cup [\ell,1]} |r(x)-\sign(x)|.
\]
It is well-known that $F(x)$ is an odd rational function of exact type $(2\lfloor (m-1)/2 \rfloor+1,2\lfloor m/2 \rfloor)$ that is real-valued on $\mathbb{R}$ and oscillates between $\lambda$ and $1$ on $[\ell,1]=[\cos\Theta,1]$, achieving these values at $m+1$ points $\ell=x_0<x_1< \dots < x_m=1$ in an alternating fashion~(\cite[p. 9]{beckermann2017singular},~\cite[Sections 50-51]{elementselliptic}).  In particular, $F(\ell)=\lambda$.  
Since $|s(z)|=1$ for $|z|=1$, it follows from~(\ref{rationalelliptic}) that $\arg (s(e^{i\theta}))$ equioscillates $m+1$ times on $[-\Theta,\Theta]$, taking values in $[-\arccos\lambda,\arccos\lambda]$.  That is,
\begin{equation} \label{sequi}
\arg(s(e^{i\theta_j})) = \sigma (-1)^j \max_{\theta \in [-\Theta,\Theta]} |\arg(s(e^{i\theta}))| = \sigma (-1)^j \arccos\lambda, \quad j=0,1,\dots,m,
\end{equation}
where $\sigma \in \{-1,1\}$ and 
\[
\theta_j = 
\begin{cases}
-\arccos x_{2j}, &\mbox{ if } j \le m/2, \\
\arccos x_{2m-2j}, &\mbox{ if } j > m/2.
\end{cases}
\]
We will eventually use this fact, together with Remark~\ref{remark:oddm}, to prove the optimality of $s$ and $r$.

\subsection{Proof of Theorem~\ref{thm:rationalelliptic}} \label{sec:proof1}

Let us first prove Theorem~\ref{thm:rationalelliptic}, beginning with the case in which $m=2n+1$ and $n$ is even. 
\medskip
\paragraph{\textbf{Case 1} ($m=2n+1$, $n$ even).}
The fact that the right-hand side of~(\ref{rationalelliptic}) is a rational function of $z$, much less of type $(2n+1,2n)$, is not obvious at first glance.  To prove this, we recall the identities~\cite[p. 214]{elementselliptic}
\begin{align} 
\sn\left( \frac{u}{M}, \lambda \right) &= \frac{\sn(u,\ell)}{M} \prod_{k=1}^n \frac{ 1+\sn^2(u,\ell)\frac{\cn^2(\ww,\ell')}{\sn^2(\ww,\ell')} }{ 1+\sn^2(u,\ell)\frac{\cn^2(\uu,\ell')}{\sn^2(\uu,\ell')} }, \label{snrat} \\
\dn\left( \frac{u}{M}, \lambda \right) &=  \dn(u,\ell) \prod_{k=1}^n \frac{ 1-\sn^2(u,\ell)\dn^2(\uu,\ell') }{ 1+\sn^2(u,\ell)\frac{\cn^2(\uu,\ell')}{\sn^2(\uu,\ell')} }, \label{dnrat}
\end{align}
where $v_j = \frac{j}{m}K(\ell')$.
Let us denote
\begin{align}
\widetilde{F}(z) &= \widetilde{F}_{2n+1}(z;\Theta) =  F_{2n+1}\left(\frac{1}{2}(z+z^{-1})
; \ell \right), \label{Ftilde} \\
\widetilde{G}(z) &= \widetilde{G}_{2n+1}(z;\Theta) = \sign(\Im z) G_{2n+1}\left(\frac{1}{2}(z+z^{-1}); \ell \right). \label{Gtilde}
\end{align}
Note that $\widetilde{F}(z) - i\widetilde{G}(z) = (\widetilde{F}(z) + i\widetilde{G}(z))^{-1}$ since $\lambda^2 \sn^2(\cdot,\lambda) + \dn^2(\cdot,\lambda) = 1$.
Using the fact that 
\[
\ell \sn(u,\ell) = \frac{1}{2}(z+z^{-1}) \iff \dn(u,\ell) = \frac{1}{2i}(z-z^{-1}) \sign(\Im z),
\]
we can write
\begin{align*}
\widetilde{F}(z) &= \frac{\lambda}{2M\ell}(z+z^{-1}) \prod_{k=1}^n \frac{ 1+\frac{\left(\frac{1}{2}(z+z^{-1})\right)^2 \cn^2(\ww,\ell')}{\ell^2 \sn^2(\ww,\ell')} }{ 1+\frac{\left(\frac{1}{2}(z+z^{-1})\right)^2 \cn^2(\uu,\ell')}{\ell^2 \sn^2(\uu,\ell')} }, \\
\widetilde{G}(z) &= \frac{1}{2i} (z-z^{-1})  \prod_{k=1}^n \frac{ 1-\frac{\left(\frac{1}{2}(z+z^{-1})\right)^2}{\ell^2}\dn^2(\uu,\ell') }{ 1+\frac{\left(\frac{1}{2}(z+z^{-1})\right)^2 \cn^2(\uu,\ell')}{\ell^2 \sn^2(\uu,\ell')} }.
\end{align*}
From these expressions it is easy to deduce that $\widetilde{F}(z)+ i \widetilde{G}(z)$ is a rational function which is ostensibly of type $(4n+2,4n+1)$.  However, this turns out to be an overestimate: $\widetilde{F}(z)$ and $i\widetilde{G}(z)$ have $2n+1$ coincident poles (one of which is at $z=0$) with opposite residues, rendering $\widetilde{F}(z)+ i \widetilde{G}(z)$ of type $(2n+1,2n)$.

To see why, it is helpful to rewrite $F(x)$ and $G(x)$ in terms of the Gr\"otsch ring function
\[
\mu(\lambda) = \frac{\pi}{2}\frac{K(\lambda')}{K(\lambda)}, \quad \lambda' = \sqrt{1-\lambda^2}
\]
and the functions
\[
\begin{split}
f_\nu(x) &= \ell \sn(K(\ell)x, \ell),  \\
g_\nu(x) &= \dn(K(\ell)x, \ell),
\end{split}
\quad\quad\quad \ell = \mu^{-1}(1/\nu).
\]
One readily checks, using~(\ref{Kratio}), that
\begin{equation} \label{FGcomp}
\begin{split}
F(x) = f_{m\nu}(f_\nu^{-1}(x)), \\
G(x) = g_{m\nu}(f_\nu^{-1}(x)),
\end{split}
\quad\quad\quad \nu = \frac{1}{\mu(\ell)}.
\end{equation}
Similar formulas for $F$ appear in~\cite{bogatyrev2010chebyshev,bogatyrev2012rational}.

Next, we recall that the poles of $\sn(u,\lambda)$ occur at $u \in \{ 2p K(\lambda) + i (2j-1) K(\lambda') \mid p,j \in \mathbb{Z}\}$~\cite[Equation 2.2.9]{lawden1989elliptic}.  The finite nonzero poles of $\widetilde{F}(z)$ thus occur at those $z \in \mathbb{C}$ for which
\begin{equation} \label{polerelation}
K(\lambda) f_\nu^{-1}\left( \frac{1}{2}(z+z^{-1}) \right) = 2p K(\lambda) + i (2j-1) K(\lambda'), \quad p,j \in \mathbb{Z}.
\end{equation}
That is,
\begin{align*}
\frac{1}{2}(z+z^{-1}) 
&= f_\nu \left( 2p + i (2j-1) \frac{K(\lambda')}{K(\lambda)} \right) \\
&= f_\nu \left( 2p + i \frac{2j-1}{m} \frac{K(\ell')}{K(\ell)} \right) \\
&= \ell \sn\left( 2p K(\ell) +  i \frac{2j-1}{m} K(\ell'), \ell \right) \\
&= (-1)^p \ell \sn( i v_{2j-1}, \ell).
\end{align*}
Here, we used~(\ref{Kratio}), the notation $v_j = \frac{j}{m}K(\ell')$, and the half-period identity $\sn(2pK(\ell) + u, \ell) = (-1)^p \sn(u,\ell)$~\cite[Equation 2.2.11]{lawden1989elliptic}.

The numbers $z$ satisfying $\frac{1}{2}(z+z^{-1}) = (-1)^p \ell \sn(i v_{2j-1},\ell)$ are given by
\[
z = (-1)^p \left( \ell \sn( i v_{2j-1}, \ell) \pm i \dn( i v_{2j-1}, \ell) \right).
\]
Indeed, since $\ell^2 \sn^2(\cdot,\ell) + \dn^2(\cdot,\ell) = 1$, we have $z^{-1} = (-1)^p \left( \ell \sn( i v_{2j-1}, \ell) \mp i \dn( i v_{2j-1}, \ell) \right)$.

We conclude that the finite nonzero poles of $\widetilde{F}(z)$ occur at
\[
\{z_{j,p,q} \mid p,q =0,1, \, j =1,2,\dots,n \},
\]
where
\[
z_{j,p,q} = (-1)^p \left( \ell \sn( i v_{2j-1}, \ell) + (-1)^q i \dn( i v_{2j-1}, \ell) \right).
\]
The finite nonzero poles of $i\widetilde{G}(z)$ are identical, since $\dn(\cdot,\lambda)$ and $\sn(\cdot,\lambda)$ have the same poles.  All of these poles are simple poles thanks to the simplicity of the poles of $\sn$ and $\dn$.

Below we relate the residues of $\widetilde{F}(z)$ to those of $i\widetilde{G}(z)$.

\begin{lemma}
We have
\begin{equation}
\Res(\widetilde{F}, \, z_{j,p,q}) = 
\begin{cases}
\Res( i \widetilde{G}, \, z_{j,p,q}), &\mbox{ if } j+q \text{ is odd, } \\
-\Res( i \widetilde{G}, \, z_{j,p,q}), &\mbox{ if } j+q \text{ is even. } \\
\end{cases}
\end{equation}
In particular, $\Res(\widetilde{F}+i\widetilde{G}, \, z_{j,p,q}) = 0$ if $j+q$ is even.
\end{lemma}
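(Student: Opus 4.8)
The plan is to reduce everything to the residues of Jacobi's $\sn$ and $\dn$ at their poles, using the change of variables already set up in the pole analysis above. Near a fixed point write $\widetilde F(z) = \lambda\,\sn(w(z),\lambda)$ and $\widetilde G(z) = \sign(\Im z)\,\dn(w(z),\lambda)$, where $w(z) = \frac{1}{M}\sn^{-1}\big(\frac{z+z^{-1}}{2\ell},\ell\big)$ with the branch chosen so that $w(z_{j,p,q}) = 2pK(\lambda) + i(2j-1)K(\lambda')$; by the computation above this is a common pole of $\sn(\cdot,\lambda)$ and $\dn(\cdot,\lambda)$, and since $z_{j,p,q}$ is already known to be a simple pole of $\widetilde F$ and $\widetilde G$, the derivative $D := w'(z_{j,p,q})$ is finite and nonzero. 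The elementary composition rule for residues (if $h$ has a simple pole at $a$ with residue $\rho$, and $\psi$ is holomorphic at $b$ with $\psi(b)=a$ and $\psi'(b)\neq 0$, then $\Res(h\circ\psi,b) = \rho/\psi'(b)$) then reduces the lemma to knowing the residues of $\sn(\cdot,\lambda)$ and $\dn(\cdot,\lambda)$ at $2pK(\lambda)+i(2j-1)K(\lambda')$, together with the sign of $\Im z_{j,p,q}$.

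For the residues I would use the standard values $\Res(\sn(\cdot,\lambda), iK(\lambda')) = 1/\lambda$ and $\Res(\dn(\cdot,\lambda), iK(\lambda')) = -i$ together with the half-period relations $\sn(u+2K(\lambda),\lambda) = -\sn(u,\lambda)$, $\sn(u+2iK(\lambda'),\lambda) = \sn(u,\lambda)$, $\dn(u+2K(\lambda),\lambda) = \dn(u,\lambda)$, $\dn(u+2iK(\lambda'),\lambda) = -\dn(u,\lambda)$; these give $\Res(\sn(\cdot,\lambda), 2pK(\lambda)+i(2j-1)K(\lambda')) = (-1)^p/\lambda$ and $\Res(\dn(\cdot,\lambda), 2pK(\lambda)+i(2j-1)K(\lambda')) = (-1)^j i$. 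Writing $\epsilon_{j,p,q} = \sign(\Im z_{j,p,q}) \in \{-1,1\}$, the composition rule yields $\Res(\widetilde F, z_{j,p,q}) = (-1)^p/D$ and $\Res(i\widetilde G, z_{j,p,q}) = i\cdot\epsilon_{j,p,q}(-1)^j i/D = \epsilon_{j,p,q}(-1)^{j+1}/D$, so their ratio is $(-1)^p\,\epsilon_{j,p,q}\,(-1)^{j+1}$. Hence, once $\epsilon_{j,p,q} = (-1)^{p+q}$ is established, the ratio equals $(-1)^{j+q+1}$, which is $+1$ when $j+q$ is odd and $-1$ when $j+q$ is even; the ``in particular'' clause follows immediately by adding the two residues.

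The step I expect to be the crux is the sign identity $\sign(\Im z_{j,p,q}) = (-1)^{p+q}$. Applying Jacobi's imaginary transformations $\sn(iv,\ell) = i\,\sn(v,\ell')/\cn(v,\ell')$ and $\dn(iv,\ell) = \dn(v,\ell')/\cn(v,\ell')$ shows that $z_{j,p,q}$ is purely imaginary and that $\Im z_{j,p,q}$ has the same sign as $(-1)^p\big(\ell\,\sn(v_{2j-1},\ell') + (-1)^q\dn(v_{2j-1},\ell')\big)$, since $\cn(v_{2j-1},\ell')>0$ for $v_{2j-1} = \frac{2j-1}{2n+1}K(\ell') \in (0,K(\ell'))$. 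For $q=0$ both summands are positive, so the sign is $(-1)^p = (-1)^{p+q}$. For $q=1$ one needs $\phi(v) := \ell\,\sn(v,\ell') - \dn(v,\ell')$ to be negative on $(0,K(\ell'))$; this is a short calculus argument: $\phi(0) = -1$, $\phi(K(\ell')) = \ell - \sqrt{1-\ell'^2} = \ell - \ell = 0$, and $\phi'(v) = \cn(v,\ell')\big(\ell\,\dn(v,\ell') + \ell'^2\sn(v,\ell')\big) > 0$ on $(0,K(\ell'))$, so $\phi$ increases from $-1$ to $0$ and stays negative, giving sign $(-1)^{p+1} = (-1)^{p+q}$. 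Beyond this, the only real work is careful bookkeeping of the sign factors and pinning down the correct normalizations in the residue and transformation formulas for $\sn$ and $\dn$.
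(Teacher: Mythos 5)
Your proof is correct and follows essentially the same route as the paper: reduce to the residues of $\sn(\cdot,\lambda)$ and $\dn(\cdot,\lambda)$ at $2pK(\lambda)+i(2j-1)K(\lambda')$ via the composition/change-of-variables rule, then track the signs $(-1)^p$, $(-1)^j$, and $\sign(\Im z_{j,p,q})=(-1)^{p+q}$. Your only addition is an explicit verification of the sign identity $\sign(\Im z_{j,p,q})=(-1)^{p+q}$ (via the monotonicity of $\ell\sn(v,\ell')-\dn(v,\ell')$), which the paper asserts without proof; this is a worthwhile detail but not a different argument.
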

\begin{proof}
In view of~(\ref{FGcomp}), the residues of $F\left(\frac{1}{2}(z+z^{-1})\right)$ and $G\left(\frac{1}{2}(z+z^{-1})\right)$ at $z_{j,p,q}$ are proportional to the residues of $f_{m\nu}(u) = \lambda \sn(K(\lambda)u,\lambda)$ and $g_{m\nu}(u) = \dn(K(\lambda)u,\lambda)$ at $u = f_\nu^{-1}\left(\frac{1}{2}(z_{j,p,q}+z_{j,p,q}^{-1})\right) =: u_{j,p,q}$, with the constant of proportionality the same in both cases.   From~(\ref{polerelation}), we have
\[
K(\lambda) u_{j,p,q} = 2pK(\lambda) + i(2j-1)K(\lambda'),
\] 
so~\cite[p. 41-42]{lawden1989elliptic}
\begin{align}
\Res(\lambda \sn(K(\lambda)u,\lambda), \, u_{j,p,q}) = (-1)^p / K(\lambda), \label{resF} \\
\Res(\dn(K(\lambda)u,\lambda), \, u_{j,p,q}) = (-1)^j i / K(\lambda). 
\end{align}
Since $\sign(\Im z_{j,p,q}) = (-1)^{p+q}$, it follows that 
\begin{equation} \label{resG}
i \sign(\Im z_{j,p,q}) \Res(\dn(K(\lambda)u,\lambda), \, u_{j,p,q}) = (-1)^{j+p+q+1} / K(\lambda).
\end{equation}
Comparing~(\ref{resF}) with~(\ref{resG}), we see that the residues of $\widetilde{F}$ and $i\widetilde{G}$ are equal if $j+q$ is odd, and they are opposite if $j+q$ is even.
\end{proof}

We conclude that the function $\widetilde{F}(z)+i\widetilde{G}(z)$ has only $2n$ finite nonzero poles,
\[
\pm \left( \ell \sn( i v_{2j-1}, \ell) + (-1)^{j+1} i \dn( i v_{2j-1}, \ell) \right), \quad j=1,2,\dots,n.
\]
All of these poles are simple.  Since $\widetilde{F}(z)+i\widetilde{G}(z)$ has unit modulus on the unit circle, its finite nonzero roots are the reciprocals of these poles.  

Now observe that since $\sn(iu,\ell) = i\frac{\sn(u,\ell')}{\cn(u,\ell')}$ and $\dn(iu,\ell) = \frac{\dn(u,\ell')}{\cn(u,\ell')}$~\cite[Equation 2.6.12]{lawden1989elliptic}, we have
\begin{align}
\left( \ell \sn( i v_{2j-1}, \ell) + (-1)^{j+1} i \dn( i v_{2j-1}, \ell) \right)^2 \nonumber
&= \left( \ell \sn( i v_{2j-1}, \ell) + i \dn( i v_{2j-1}, \ell) \right)^{2(-1)^{j+1}} \nonumber \\
&= \left( \frac{ i \ell \sn( v_{2j-1}, \ell' ) + i \dn( v_{2j-1}, \ell' )}{ \cn( v_{2j-1}, \ell' ) } \right)^{2(-1)^{j+1}} \nonumber \\
&= -\left( \frac{ \ell \sn( v_{2j-1}, \ell' ) + \dn( v_{2j-1}, \ell' )}{ \cn( v_{2j-1}, \ell' ) } \right)^{2(-1)^{j+1}} \nonumber \\
&= -1/a_j, \label{ajinv}
\end{align}
where $a_j$ is given by~(\ref{aj}) (recall that we are still focusing on the case in which $m=2n+1$ and $n$ is even).

It follows that 
\begin{equation} \label{rationalguess}
\widetilde{F}(z) + i \widetilde{G}(z)
= e^{i\alpha} z^k \prod_{j=1}^n \frac{z^2+a_j}{1+a_j z^2}
\end{equation}
for some $\alpha \in \mathbb{R}$ and some $k \in \mathbb{Z}$.  We must have $e^{i\alpha}=1$ since $\widetilde{F}(1) = F(1)>0$ and $\widetilde{G}(1)=0$.  We must have $k \ge -1$ since $\widetilde{F}(z)$ and $\widetilde{G}(z)$ each have simple poles at $z=0$.  We must have $k \le 1$ for a similar reason: $\widetilde{F}(z) - i\widetilde{G}(z) = \frac{1}{\widetilde{F}(z) + i\widetilde{G}(z)}$ cannot have a pole of order $>1$ at $z=0$.  To conclude, note that at $z=i$, the left-hand side of~(\ref{rationalguess}) evaluates to $i$, while the right-hand side evaluates to $i^k (-1)^n = i^k$.  The only possibility is $k=1$.  Thus,
\[
\widetilde{F}(z)+i\widetilde{G}(z) = z \prod_{j=1}^n \frac{z^2+a_j}{1+a_j z^2} = \frac{z}{r_n(z^2;\Theta)}, \quad \text{ if $m=2n+1$ and $n$ is even. }
\]
In view of~(\ref{signapprox_ratio}), this completes the proof of Theorem~\ref{thm:rationalelliptic} for the case in which $m=2n+1$ and $n$ is even.

\medskip
\paragraph{\textbf{Case 2} ($m=2n+1$, $n$ odd).}
The case in which $m=2n+1$ and $n$ is odd is handled similarly.  This time,~(\ref{ajinv}) becomes
\[
\left( \ell \sn( i v_{2j-1}, \ell) + (-1)^{j+1} i \dn( i v_{2j-1}, \ell) \right)^2 = -a_j,
\]
so that~(\ref{rationalguess}) becomes
\begin{equation} \label{rationalguess2}
\widetilde{F}(z) + i \widetilde{G}(z)
= e^{i\alpha} z^k \prod_{j=1}^n \frac{1+a_j z^2}{z^2+a_j}.
\end{equation}
As before, we can argue that $e^{i\alpha}=1$ and $-1 \le k \le 1$.  At $z=i$, the left-hand side evaluates to $i$, while the right-hand side evaluates to $i^k (-1)^n = -i^k$.  We conclude that $k=-1$.  That is,
\[
\widetilde{F}(z)+i\widetilde{G}(z) = \frac{1}{z} \prod_{j=1}^n \frac{1+a_j z^2}{z^2+a_j} = \frac{r_n(z^2;\Theta)}{z}, \quad \text{ if $m=2n+1$ and $n$ is odd. }
\]

\medskip
\paragraph{\textbf{Case 3} ($m=2n$).}
Finally, when $m=2n$, the identities~(\ref{snrat}-\ref{dnrat}) change to~\cite[p. 214]{elementselliptic}
\begin{align} 
\sn\left( \frac{u}{M}, \lambda \right) &= \frac{\sn(u,\ell)}{M} \frac{ \prod_{k=1}^{n-1} 1+\sn^2(u,\ell)\frac{\cn^2(\ww,\ell')}{\sn^2(\ww,\ell')} }{ \prod_{k=1}^n 1+\sn^2(u,\ell)\frac{\cn^2(\uu,\ell')}{\sn^2(\uu,\ell')} }, \label{snrat2} \\
\dn\left( \frac{u}{M}, \lambda \right) &=  \prod_{k=1}^n \frac{ 1-\sn^2(u,\ell)\dn^2(\uu,\ell') }{ 1+\sn^2(u,\ell)\frac{\cn^2(\uu,\ell')}{\sn^2(\uu,\ell')} }. \label{dnrat2}
\end{align}
Note that in contrast to~\cite[p. 214]{elementselliptic}, we terminated the product in the numerator of~(\ref{snrat2}) at $k=n-1$ rather than $k=n$ since $\cn(v_{2n},\ell')=0$ when $m=2n$.
Accordingly, we put
\begin{align}
\widetilde{F}(z) &= \widetilde{F}_{2n}(z;\Theta) = F_{2n}\left(\frac{1}{2}(z+z^{-1}); \ell\right), \label{Ftilde2} \\
\widetilde{G}(z) &= \widetilde{G}_{2n}(z;\Theta) =  G_{2n}\left(\frac{1}{2}(z+z^{-1}); \ell\right), \label{Gtilde2}
\end{align}
and we observe that $\widetilde{F}(z) + i\widetilde{G}(z)$ is a rational function which is ostensibly of type $(4n,4n)$.  However, $2n$ of the poles $z_{j,p,q}$ coincide and have opposite residues; this time it is those poles $z_{j,p,q}$ for which $j+p$ is even, since the factor $\sign(\Im z_{j,p,q})$ does not appear in the analysis (compare~(\ref{Gtilde2}) with~(\ref{Gtilde})).
Since $\widetilde{F}$ and $i\widetilde{G}$ have $2n$ coincident poles with opposite residues, $\widetilde{F}(z)+i\widetilde{G}(z)$ is in fact of type $(2n,2n)$.  The poles of $\widetilde{F}(z) + i\widetilde{G}(z)$ are
\[\begin{split}
(-1)^{j+1} \left( \ell \sn( i v_{2j-1}, \ell) \pm i \dn( i v_{2j-1}, \ell) \right)  &= \pm (-1)^{j+1} i b_j^{\pm (-1)^j} \\&= -(ib_j)^{\pm(-1)^j} , \quad j=1,2,\dots,n,
\end{split}\]
where $b_j$ is given by~(\ref{bj}).  One checks that the sets $\{-(ib_j)^{(-1)^j}\}_{j=1}^n \cup \{-(ib_j)^{-(-1)^j}\}_{j=1}^n$ and $\{-1/(ib_j)\}_{j=1}^m$ are equal, so $\widetilde{F}(z)+i\widetilde{G}(z)$ must have the form
\begin{equation} \label{rationalguess3}
\widetilde{F}(z) + i \widetilde{G}(z)
= e^{i\alpha} z^k \prod_{j=1}^m \frac{z - i b_j}{1 + i b_j z}.
\end{equation}
Again, we can argue that $-1 \le k \le 1$, but this time we cannot conclude that $e^{i\alpha}=1$ by evaluating both sides of~(\ref{rationalguess3}) at $z=1$.  Instead, we evaluate both sides at $z=i$ to obtain $i = e^{i\alpha} i^{k+m}$, and we evaluate both sides at $z=-i$ to obtain $i = e^{i\alpha} (-i)^{k+m}$.  We conclude that $k+m$ is even, and since $m$ is too, we have $k=0$ and $e^{i\alpha} = i^{1-m}$.  

This completes the proof of Theorem~\ref{thm:rationalelliptic}.

As a final remark, we note that Theorem~\ref{thm:rationalelliptic} also holds trivially when $m=0$ if we adopt the convention that $\lambda:=0$ when $m=0$.

\subsection{Proof of Theorem~\ref{thm:main}} \label{sec:proof2}

Let us now use Theorem~\ref{thm:rationalelliptic} to prove Theorem~\ref{thm:main}.  We first elaborate on the relation between Problems~\ref{problemZ5} and~\ref{problemZ6}.  Observe that if $w=z^2$, then $w \in S_\Theta \iff z \in T_\Theta$, and
\begin{equation} \label{samearg}
\arg\left( \frac{z p(z^2)/q(z^2) }{ \sign(z) } \right) = -\arg\left( \frac{q(w)/p(w)}{\sqrt{w}} \right) = -\arg\left( \frac{q(z^2)/(zp(z^2)) }{ \sign(z) } \right)
\end{equation}
for any polynomials $p$ and $q$.  In view of~(\ref{sequi}) and Remark~\ref{remark:oddm}, it follows that $\arg\left( \frac{r(e^{i\theta})}{\sqrt{e^{i\theta}}} \right)$ 
equioscillates $2n+2$ times on $[-2\Theta,2\Theta]$, taking values in $[-\arccos\lambda,\arccos\lambda]$.  
Suppose now that $\widetilde{r}(z)$ is another rational function of type $(n,n)$ satisfying $|\widetilde{r}(z)|=1$ for $|z|=1$ and 
\[
\max_{z \in S_\Theta} \left| \arg\left( \frac{\widetilde{r}(z)}{\sqrt{z}} \right) \right| \le \arccos\lambda.
\]
Then the equioscillation of $\arg\left( \frac{r(e^{i\theta})}{\sqrt{e^{i\theta}}} \right)$ implies that on $[-2\Theta,2\Theta]$,
\[
\arg\left( \frac{r(e^{i\theta})}{\sqrt{e^{i\theta}}} \right) - \arg\left( \frac{\widetilde{r}(e^{i\theta})}{\sqrt{e^{i\theta}}} \right) = \arg\left( \frac{r(e^{i\theta})}{\widetilde{r}(e^{i\theta})} \right)
\]
has at least $2n+1$ roots, counted with multiplicity.  
Hence, the numerator of $r(z)-\widetilde{r}(z)$ has at least $2n+1$ roots, counted with multiplicity.
Since $r(z)-\widetilde{r}(z)$ has type $(2n,2n)$, it follows that $\widetilde{r}=r$.  
This shows that Problem~\ref{problemZ5} has a unique solution, namely $r$.

The proof that Problem~\ref{problemZ6} has precisely two solutions---$s(z)$ and $s(z)^{-1}$---proceeds similarly.  Assume $m>0$; otherwise the claim is trivial.  We see from~(\ref{sequi}) that 
\[
\max_{z \in T_\Theta} \left| \arg\left( \frac{s(z)}{\sign(z)} \right) \right| = \arccos\lambda,
\]
and $\arg\left( \frac{s(e^{i\theta})}{\sign(e^{i\theta})} \right)$ equioscillates $m+1$ times on $[-\Theta,\Theta]$ and $m+1$ times on $[\pi-\Theta,\pi+\Theta]$, owing to the fact that $\sign(e^{i\theta}) = 1$ when $\theta \in [-\Theta,\Theta]$, $\sign(e^{i\theta}) = -1$ when $\theta \in [\pi-\Theta,\pi+\Theta]$, and $-s(e^{i\theta}) = s(e^{i(\pi-\theta)})^{-1}$ for all $\theta$.  The same is true of $\arg\left( \frac{s(e^{i\theta})^{-1}}{\sign(e^{i\theta})} \right)$ since 
\[
\arg\left( \frac{s(e^{i\theta})^{-1}}{\sign(e^{i\theta})} \right) = -\arg\left( \frac{s(e^{i\theta})}{\sign(e^{i\theta})} \right)
\]
for all $\theta$.  Suppose now that $\widetilde{s}(z)$ is another rational function of type $(m,m)$ satisfying $|\widetilde{s}(z)|=1$ for $|z|=1$ and 
\[
\max_{z \in T_\Theta} \left| \arg\left( \frac{\widetilde{s}(z)}{\sign{z}} \right) \right| \le \arccos\lambda.
\]
Then the same reasoning as above shows that the numerator of $\widetilde{s}(z)-s(z)$ has at least $2m$ roots counted with multiplicity.  At least $m$ of them lie in $\{z \in T_\Theta \mid \Re z > 0\}$, and at least $m$ of them lie in $\{z \in T_\Theta \mid \Re z < 0\}$.  Likewise, the numerator of $\widetilde{s}(z)-s(z)^{-1}$ has at least $2m$ roots counted with multiplicity, at least $m$ of which lie in $\{z \in T_\Theta \mid \Re z > 0\}$ and at least $m$ of which lie in $\{z \in T_\Theta \mid \Re z < 0\}$.  By considering the graphs of $\arg s(z)$ and $\arg (s(z)^{-1})$ (see Figure~\ref{fig:argsign}), there must also be at least one point $z \in \{z \in \mathbb{C} \mid |z|=1, \, z \notin \operatorname{int}(T_\Theta)\}$ where either $\widetilde{s}(z)=s(z)$ or $\widetilde{s}(z)=s(z)^{-1}$.  (If all such points happen to be on the boundary of $T_\Theta$, then it is easy to see that there must have been more than $2m$ points in $T_\Theta$ (counting multiplicities) where either $\widetilde{s}(z)=s(z)$ or $\widetilde{s}(z)=s(z)^{-1}$ to begin with.)  We conclude that either $\widetilde{s}=s$ or $\widetilde{s}=1/s$.  This shows that Problem~\ref{problemZ6} has precisely two solutions: $s$ and $1/s$.

\begin{figure} 
\hspace{-0.2in}
\includegraphics[scale=0.34]{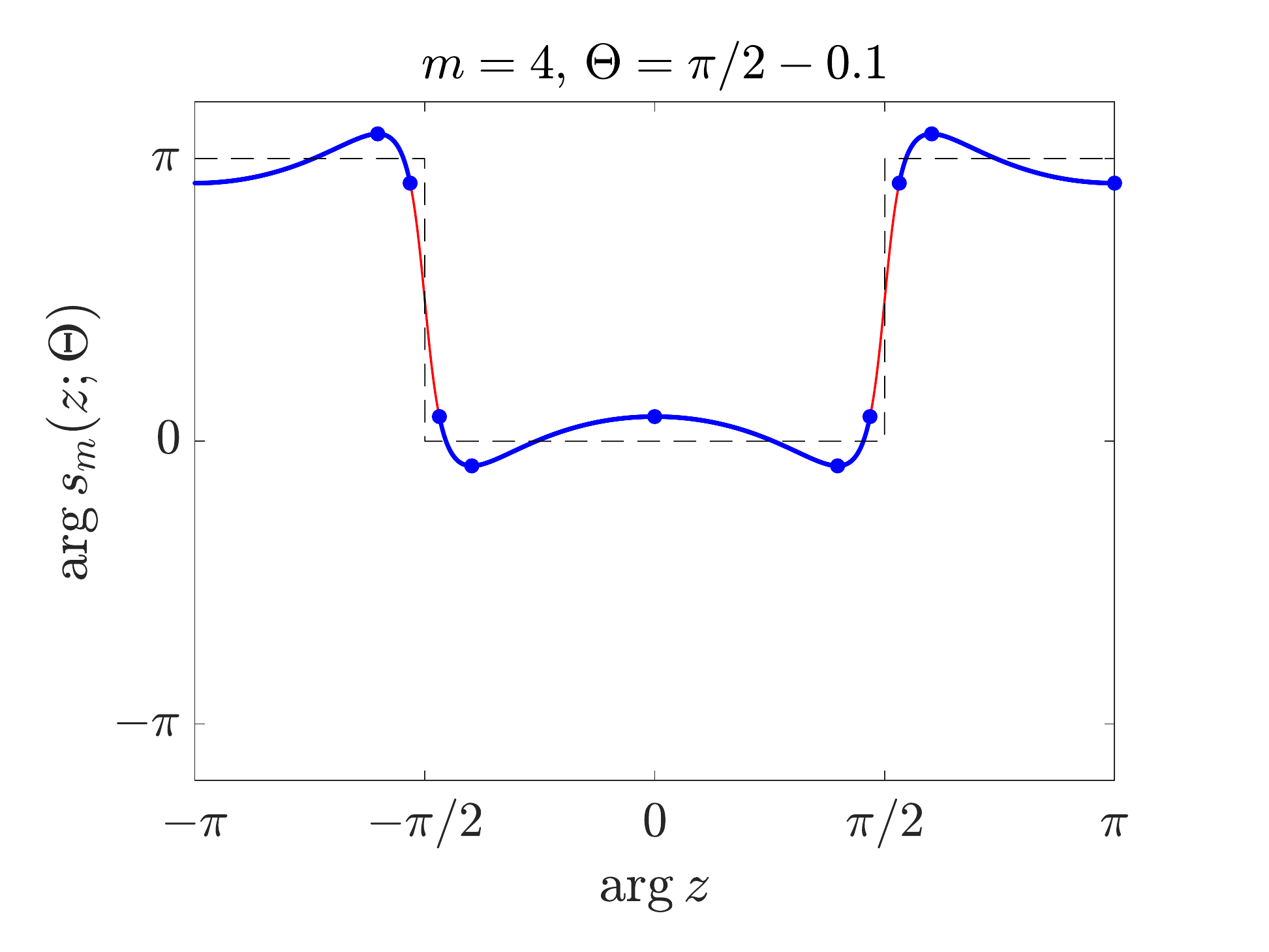}
\hspace{-0.35in}
\includegraphics[scale=0.34]{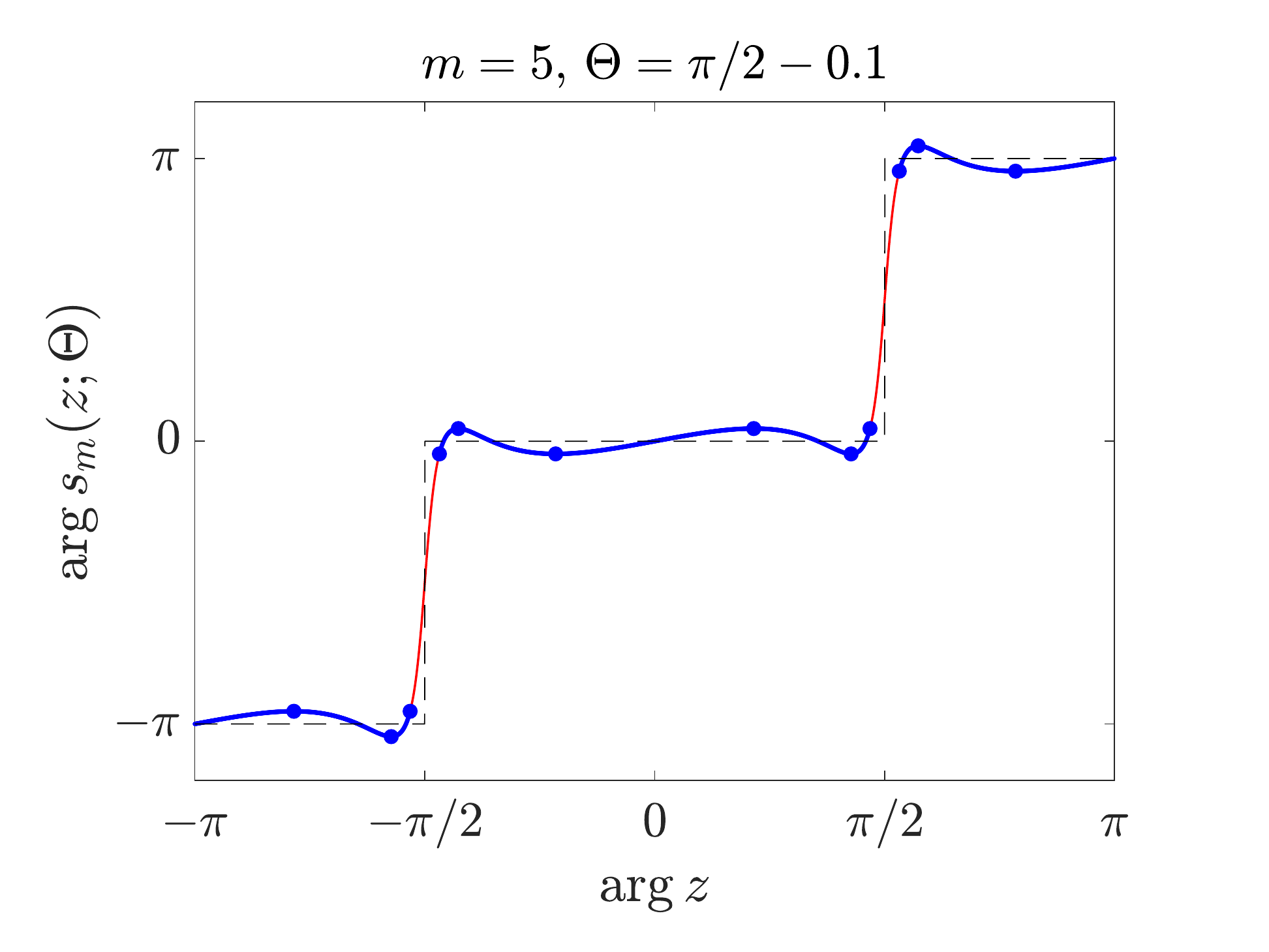}
\caption{Plots of $\arg s_m(z;\Theta)$ with $\Theta = \pi/2-0.1$ and $m=4,5$.  Portions of the graph corresponding to points $z \in T_\Theta$ (respectively, $z \notin T_\Theta$) are colored blue (respectively, red).  Extrema of the error on $T_\Theta$ are marked with blue dots.   The dashed line is $\arg \sign z$.  Both the horizontal and vertical axes are to be interpreted modulo $2\pi$.  The graphs of $\arg \left( s_m(z;\Theta)^{-1} \right)$ are obtained by reflecting the above graphs across the horizontal axis.}
\label{fig:argsign}
\end{figure}

\section{Properties of the Solutions} \label{sec:properties}

In this section, we study the error committed by the functions $r_n(z;\Theta)$ and $s_m(z;\Theta)$ from Theorem~\ref{thm:main}, and we study the behavior of $r_n(z;\Theta)$ and $s_m(z;\Theta)$ under composition.

\subsection{Error} \label{sec:error}

To study the error, we appeal to well-known properties of the function $F_m(x;\ell)$ defined in~(\ref{zolo}).  As we noted earlier, $\frac{2}{1+\lambda}F_m(x;\ell)$ is the solution to Problem \ref{problemZ4} on $[-1,-\ell] \cup [\ell,1]$.

The number $\frac{1-\lambda}{1+\lambda} = \max_{x \in [-1,-\ell] \cup [\ell,1]} \left|\frac{2}{1+\lambda}F_m(x;\ell)-\operatorname{sign}(x)\right|$ is well-studied; it satisfies~\cite[p. 9]{beckermann2017singular}
\begin{equation} \label{lambdaZ}
\frac{1-\lambda}{1+\lambda} = \frac{2\sqrt{Z_m}}{1+Z_m},
\end{equation}
where $Z_m = Z_m([-1,-\ell],[\ell,1])$ denotes the \emph{Zolotarev number} of the sets $[-1,-\ell]$ and $[\ell,1]$:
\begin{equation} \label{zolonumber}
Z_m(E,F) = \inf_{r \in \mathcal{R}_{m,m}} \frac{ \sup_{z \in E} |r(z)| }{ \inf_{z \in F} |r(z)| }.
\end{equation}
An explicit formula for $Z_m$ ($m \ge 1$) is~\cite[Theorem 3.1]{beckermann2017singular}
\[
Z_m = 4 \rho^{-2m} \prod_{j=1}^\infty \frac{ (1+\rho^{-8jm})^4 }{ (1+\rho^{4m}\rho^{-8jm})^4 } \le 4 \rho^{-2m},
\]
where
\[
\rho = \exp\left( \frac{ \pi K(\ell) }{ K(\ell') } \right) = \exp\left( \frac{ \pi K(\cos\Theta) }{ K(\sin\Theta) } \right).
\]
Note that the bound $Z_m \le 4\rho^{-2m}$ also obviously holds for $m=0$.
Solving for $\lambda$ in~(\ref{lambdaZ}), we find that
\begin{equation}
\max_{z \in T_\Theta} \left| \arg\left( \frac{s_m(z;\Theta)}{\sign(z)} \right) \right| = \arccos\lambda = \arccos\left( \left( \frac{1-\sqrt{Z_m}}{1+\sqrt{Z_m}} \right)^2 \right).
\end{equation}

We derive upper bounds for this quantity below.
\begin{lemma} \label{lemma:arccos}
For every $x \ge 0$,
\[
\arccos\left( \left( \frac{1-\sqrt{x}}{1+\sqrt{x}} \right)^2 \right) \le 2 \sqrt{2} x^{1/4}.
\]
\end{lemma}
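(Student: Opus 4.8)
The plan is to convert the inequality into a one–variable estimate on $\arcsin$ near the origin. Substituting $t=\sqrt{x}\ge 0$, the claim reads $\arccos\!\big((\tfrac{1-t}{1+t})^{2}\big)\le 2\sqrt{2}\,\sqrt{t}=2\sqrt{2t}$. The first step is to establish the exact identity
\[
\arccos\!\left(\left(\frac{1-t}{1+t}\right)^{2}\right)=2\arcsin\!\left(\frac{\sqrt{2t}}{1+t}\right),\qquad t\ge 0 .
\]
To obtain it I would compute $1-\big(\tfrac{1-t}{1+t}\big)^{2}=\tfrac{4t}{(1+t)^{2}}=2\big(\tfrac{\sqrt{2t}}{1+t}\big)^{2}$ and combine this with $1-\cos\phi=2\sin^{2}(\phi/2)$ for $\phi=\arccos\!\big((\tfrac{1-t}{1+t})^{2}\big)\in[0,\pi/2]$; this gives $\sin(\phi/2)=\tfrac{\sqrt{2t}}{1+t}$ with $\phi/2\in[0,\pi/4]$, and since $\sin$ is a bijection $[0,\pi/4]\to[0,1/\sqrt2]$ it forces $\phi/2=\arcsin\!\big(\tfrac{\sqrt{2t}}{1+t}\big)$ (and, as a byproduct, $\tfrac{\sqrt{2t}}{1+t}\le\tfrac1{\sqrt2}$, equivalently $(1-\sqrt t)^{2}\ge 0$).

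By this identity the claim is equivalent to $\arcsin\!\big(\tfrac{\sqrt{2t}}{1+t}\big)\le\sqrt{2t}$, and here the denominator $1+t$ is doing the essential work: the crude bound $\arcsin y\le\tfrac{\pi}{2}y$ loses the harmless-looking factor $\pi/2>1$ and is not sharp enough near the origin. I would split on $w:=\sqrt{2t}$. If $w\ge\pi/4$, the original inequality is immediate, since its left side is an $\arccos$ of a number in $[0,1]$, hence at most $\pi/2=2\cdot\pi/4\le 2w$. If $w<\pi/4$ (so in particular $w^{2}<4$), write $\tfrac{\sqrt{2t}}{1+t}=\tfrac{2w}{2+w^{2}}$ and use the elementary inequality $\sin w\ge w-\tfrac{w^{3}}{6}$ (valid for all $w\ge 0$, since $\tfrac{d^{2}}{dw^{2}}(\sin w-w+\tfrac{w^{3}}{6})=w-\sin w\ge 0$):
\[
(2+w^{2})\sin w\ \ge\ (2+w^{2})\Big(w-\frac{w^{3}}{6}\Big)\ =\ 2w+\frac{w^{3}(4-w^{2})}{6}\ \ge\ 2w .
\]
Hence $\tfrac{2w}{2+w^{2}}\le\sin w$, and since $\arcsin$ is increasing and $0\le w<\pi/2$, we get $\arcsin\!\big(\tfrac{2w}{2+w^{2}}\big)\le\arcsin(\sin w)=w$; plugging this back into the identity yields $\arccos\!\big((\tfrac{1-t}{1+t})^{2}\big)=2\arcsin\!\big(\tfrac{2w}{2+w^{2}}\big)\le 2w=2\sqrt{2}\,x^{1/4}$.

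I do not anticipate a real obstacle; the only mildly non-obvious move is recognizing that the left-hand side is exactly $2\arcsin$ of a quantity that is automatically $\le 1/\sqrt2$, after which the whole argument is a single cubic estimate (with the $+w^{2}$ in the denominator, itself a trace of $1+t$, being exactly what makes $\tfrac{2w}{2+w^{2}}\le\sin w$ possible). It is worth noting that the constant $2\sqrt2$ cannot be improved: as $t\to 0^{+}$ one has $\arccos\!\big((\tfrac{1-t}{1+t})^{2}\big)=2\sqrt{2t}+O(t^{3/2})$, so any proof must be tight to first order at $t=0$ --- which is precisely what the bound $\sin w\ge w-\tfrac{w^{3}}{6}$ supplies.
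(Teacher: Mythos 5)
Your proof is correct, but it takes a genuinely different route from the paper. The paper's argument is a one-line derivative comparison: with $f(x)=\arccos\bigl(((1-\sqrt{x})/(1+\sqrt{x}))^2\bigr)$ and $g(x)=2\sqrt{2}\,x^{1/4}$, one computes $f'(x) = \frac{1-\sqrt{x}}{\sqrt{2}\,x^{3/4}(1+\sqrt{x})\sqrt{1+x}} < \frac{1}{\sqrt{2}\,x^{3/4}} = g'(x)$ for $x>0$, and integrates from $0$ using $f(0)=g(0)=0$. You instead first establish the exact identity $\arccos\bigl(((1-t)/(1+t))^2\bigr) = 2\arcsin\bigl(\sqrt{2t}/(1+t)\bigr)$ via the half-angle formula, then reduce to $\frac{2w}{2+w^2}\le \sin w$ with $w=\sqrt{2t}$, which follows from the cubic Taylor bound $\sin w \ge w - w^3/6$ (after disposing of $w\ge\pi/4$ by the trivial bound $\arccos \le \pi/2$). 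All steps check out: the half-angle identity is legitimate because $\phi/2\in[0,\pi/4]$, the case split handles the range where the cubic bound would not suffice, and $(2+w^2)(w-w^3/6)=2w+\tfrac{w^3(4-w^2)}{6}\ge 2w$ for $w<2$ is correct. What your approach buys is conceptual: the $\arcsin$ identity makes transparent why the constant $2\sqrt{2}$ is sharp at $x=0$ (the bound is tight to first order there), whereas the paper's proof is shorter and avoids the case analysis, at the cost of a slightly messier derivative computation. Either proof would serve.
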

\begin{proof}
Let $f(x) = \arccos\left( \left( \frac{1-\sqrt{x}}{1+\sqrt{x}} \right)^2 \right)$ and $g(x) = 2 \sqrt{2} x^{1/4}$.  Since $f(0)=g(0)=0$ and
\[
f'(x) = \frac{1-\sqrt{x}}{\sqrt{2}x^{3/4}(1+\sqrt{x})\sqrt{1+x}} < \frac{1}{\sqrt{2} x^{3/4}} = g'(x), \quad x > 0,
\]
we have $f(x) = \int_0^x f'(t) \, dt \le \int_0^x g'(t) \, dt = g(x)$ for every $x \ge 0$.
\end{proof}

\begin{theorem}\label{thm:conv}
Let $\Theta \in (0,\pi/2)$ and $m,n \in \mathbb{N}_0$.  We have
\begin{equation} \label{signerr}
\max_{z \in T_\Theta} \left| \arg\left( \frac{s_m(z;\Theta)}{\sign(z)} \right) \right| \le  4 \rho^{-m/2} \le 4 \left[ \exp\left( \frac{\pi^2}{4\log(4\sec\Theta)} \right) \right]^{-m}
\end{equation}
and
\begin{equation} \label{sqrterr}
\max_{z \in S_\Theta} \left| \arg \left( \frac{r_n(z;\Theta)}{\sqrt{z}} \right) \right| \le  4 \rho^{-(n+1/2)} \le 4 \left[ \exp\left( \frac{\pi^2}{2\log(4\sec\Theta)} \right) \right]^{-(n+1/2)}.
\end{equation}
\end{theorem}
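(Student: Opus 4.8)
The plan is to reduce both estimates to two facts already in hand from Section~\ref{sec:error}: the exact equioscillation value of each approximant's error, which the construction of Section~\ref{sec:solutions} expresses through a Zolotarev number, together with the bound $Z_m\le 4\rho^{-2m}$. The only genuinely new ingredient is a lower bound for $\rho$.

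First I would record the exact errors. For $s_m$, the identity displayed just before Lemma~\ref{lemma:arccos} already states
\[
\max_{z\in T_\Theta}\left|\arg\frac{s_m(z;\Theta)}{\sign z}\right|
=\arccos\!\left(\left(\frac{1-\sqrt{Z_m}}{1+\sqrt{Z_m}}\right)^{2}\right).
\]
For $r_n$, the proof of Theorem~\ref{thm:main} shows that $\arg\!\big(r_n(e^{i\theta};\Theta)/\sqrt{e^{i\theta}}\big)$ equioscillates on $[-2\Theta,2\Theta]$ between $\pm\arccos\lambda$, where $\lambda$ is the modulus of Theorem~\ref{thm:rationalelliptic} belonging to degree $m=2n+1$; combining this with the $m=2n+1$ instance of~\eqref{lambdaZ} gives
\[
\max_{z\in S_\Theta}\left|\arg\frac{r_n(z;\Theta)}{\sqrt z}\right|
=\arccos\!\left(\left(\frac{1-\sqrt{Z_{2n+1}}}{1+\sqrt{Z_{2n+1}}}\right)^{2}\right).
\]
Applying Lemma~\ref{lemma:arccos} with $x=Z_m$ and then $Z_m\le 4\rho^{-2m}$ yields $\max_{z\in T_\Theta}|\arg(s_m(z;\Theta)/\sign z)|\le 2\sqrt2\,Z_m^{1/4}\le 2\sqrt2\,(4\rho^{-2m})^{1/4}=4\rho^{-m/2}$, and the same computation with $x=Z_{2n+1}$ yields $\max_{z\in S_\Theta}|\arg(r_n(z;\Theta)/\sqrt z)|\le 2\sqrt2\,(4\rho^{-2(2n+1)})^{1/4}=4\rho^{-(n+1/2)}$. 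These are the first inequalities in~\eqref{signerr} and~\eqref{sqrterr}; the cases $m=0$, $n=0$ are included since then $Z_0=1$ and the chains still close.

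It remains to show $\rho\ge\exp\!\big(\pi^2/(2\log(4\sec\Theta))\big)$. Writing $\ell=\cos\Theta$ and introducing the Gr\"otzsch ring function $\mu(\ell)=\tfrac{\pi}{2}K(\ell')/K(\ell)$—so that $\log\rho=\pi K(\ell)/K(\ell')=\pi^2/(2\mu(\ell))$ and $\sec\Theta=1/\ell$—this is exactly the classical bound $\mu(\ell)\le\log(4/\ell)$, valid for $\ell\in(0,1)$. Granting it, raising the two displayed upper bounds to the power $-m/2$, respectively $-(n+1/2)$, and multiplying by $4$ gives the remaining inequalities of~\eqref{signerr}, \eqref{sqrterr}. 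To keep the proof self-contained I would establish $\mu(r)<\log(4/r)$ directly: with $\varphi(r)=\log(4/r)-\mu(r)$ and the standard identity $\mu'(r)=-\pi^2/\!\big(4r(r')^2K(r)^2\big)$ one finds $\varphi'(r)=r^{-1}\big(\pi^2/(4(r')^2K(r)^2)-1\big)$, which is positive because $h(r):=r'K(r)$ satisfies $h(0^+)=\pi/2$ and $h'(r)=(E(r)-K(r))/(rr')<0$ on $(0,1)$ (here $E(r)$ is the complete elliptic integral of the second kind, and $E(r)<K(r)$ there), so $h(r)<\pi/2$; since also $\lim_{r\to0^+}\varphi(r)=0$ (from the asymptotic $\mu(r)=\log(4/r)+o(1)$), $\varphi$ is positive on $(0,1)$.

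The first two paragraphs are essentially bookkeeping, the one thing to watch being that the error of $r_n$ is controlled by the degree-$(2n+1)$ Zolotarev data, not the degree-$n$ data, so one must keep track of which index each $\lambda$ and $Z_m$ carries. I expect the genuine obstacle to be the final step—the Gr\"otzsch inequality $\mu(\ell)\le\log(4/\ell)$, equivalently the monotone decrease of $r\mapsto r'K(r)$ from $\pi/2$—although its verification is short.
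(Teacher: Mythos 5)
Your proposal is correct and follows essentially the same route as the paper: reduce to $Z_m$ via Lemma~\ref{lemma:arccos} and \eqref{lambdaZ}, bound $Z_m\le 4\rho^{-2m}$, handle $r_n$ by identifying its error with that of $s_{2n+1}$, and finish with the classical bound $\tfrac{\pi}{2}K(\ell')/K(\ell)\le\log(4/\ell)$. The only difference is that the paper simply cites this last inequality from the literature, whereas you supply a short (and correct) proof of it via the monotonicity of $r\mapsto r'K(r)$; that is a harmless addition, and your minor conflation of the $n=0$ case with $Z_0$ (it actually uses $Z_1$) does not affect the argument.
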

\begin{proof}
Using Lemma~\ref{lemma:arccos} and the inequality~\cite[p. 8]{beckermann2017singular}
\[
\frac{\pi}{2} K(\sqrt{1-x^2}) / K(x) \le \log{4/x}, \quad 0 < x < 1,
\]
we compute
\begin{align*}
\max_{z \in T_\Theta} \left| \arg\left( \frac{s_m(z;\Theta)}{\sign(z)} \right) \right|
\le 2\sqrt{2} Z_m^{1/4} 
&\le 2\sqrt{2} \left( 4 \rho^{-2m} \right)^{1/4} 
= 4 \rho^{-m/2} \\
&\quad\quad\le 4 \left[ \exp\left( \frac{\pi^2}{2\log(4\sec\Theta)} \right) \right]^{-m/2}.
\end{align*}
The bound~(\ref{sqrterr}) follows from Remark~\ref{remark:oddm} and~(\ref{samearg}), which imply
\[
\max_{z \in S_\Theta} \left| \arg \left( \frac{r_n(z;\Theta)}{\sqrt{z}} \right) \right| = \max_{z \in T_\Theta} \left| \arg\left( \frac{s_{2n+1}(z;\Theta)}{\sign(z)} \right) \right|.
\]
\end{proof}

Theorem~\ref{thm:conv} is illustrated in Figure~\ref{fig:thmconv}, which shows the bounds are very tight. 

Figure~\ref{fig:contour} plots the absolute errors  $\left| r_n(z;\Theta) - \sqrt{z} \right|$ and $\left| s_m(z;\Theta) - \sign(z) \right|$ for $z \in \mathbb{C}$.

\begin{figure} 
\includegraphics[scale=0.45]{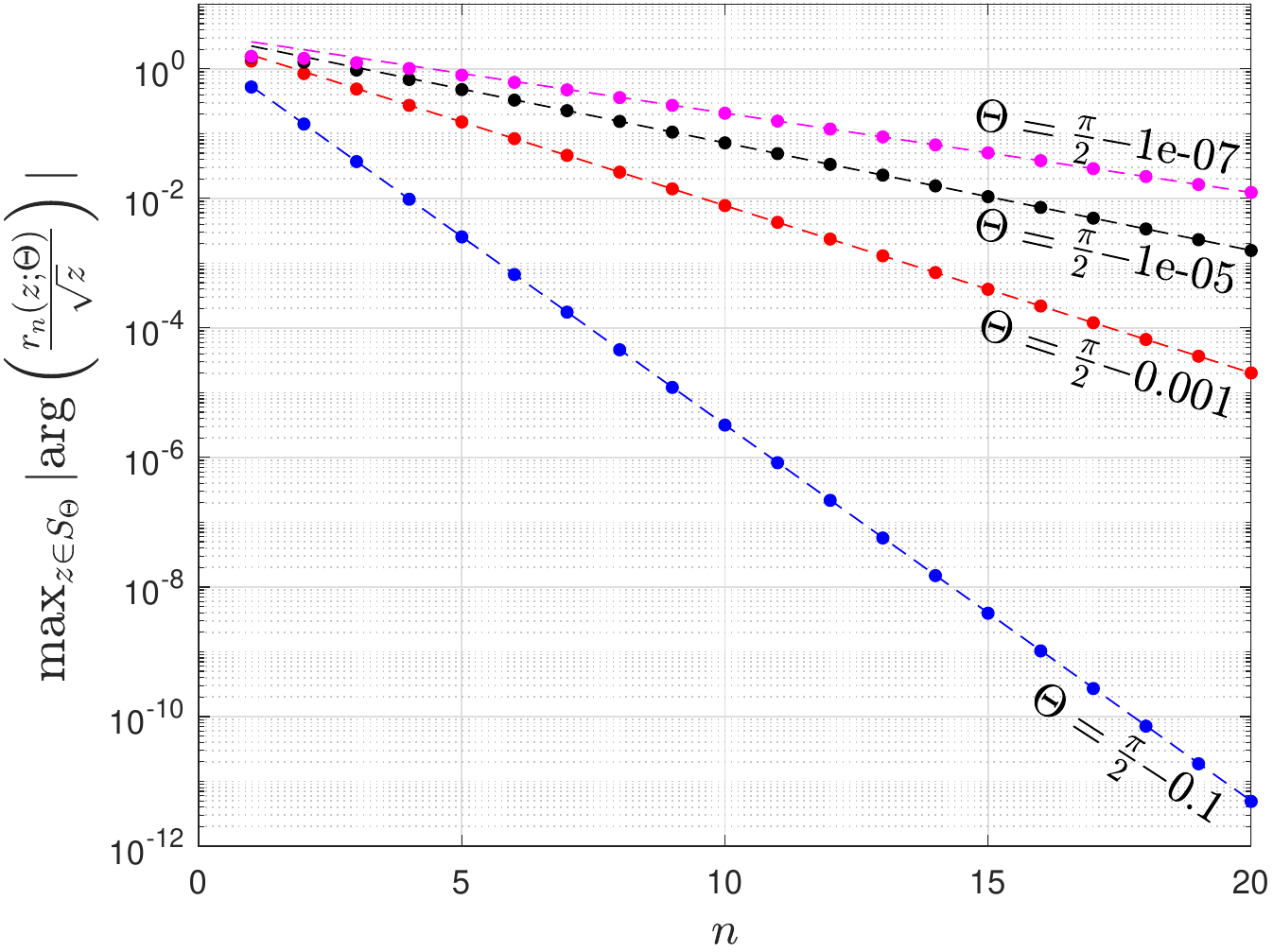}
\hspace{0.05in}
\includegraphics[scale=0.45]{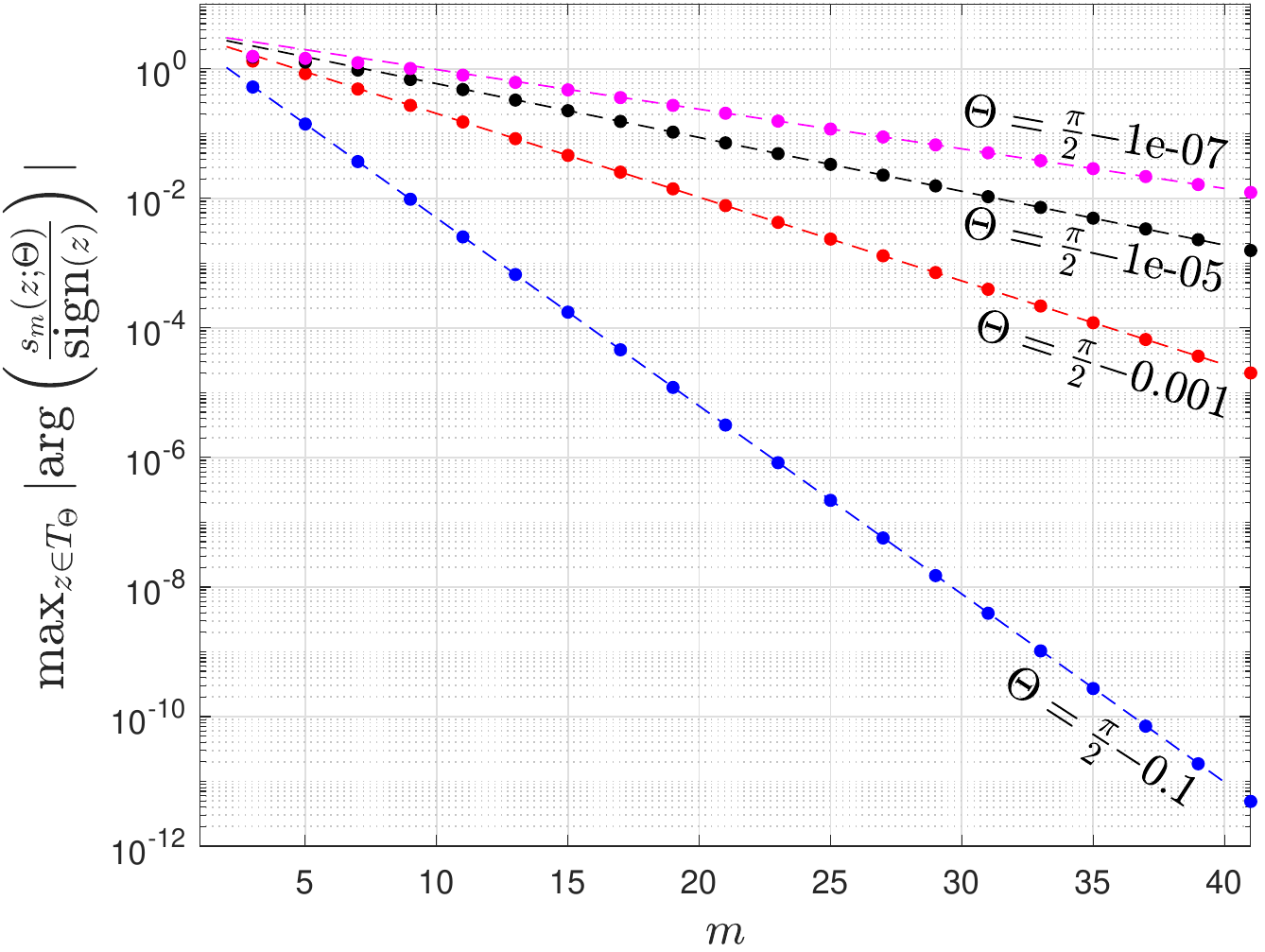}
\caption{The errors (dots) and their bounds 
in Theorem~\ref{thm:conv} (dashed lines, the rightmost bounds in \eqref{signerr}, \eqref{sqrterr} are shown) for Z5 (left) and Z6 (right).
}
\label{fig:thmconv}
\end{figure}

\begin{figure} 
\centering
\includegraphics[scale=0.45]{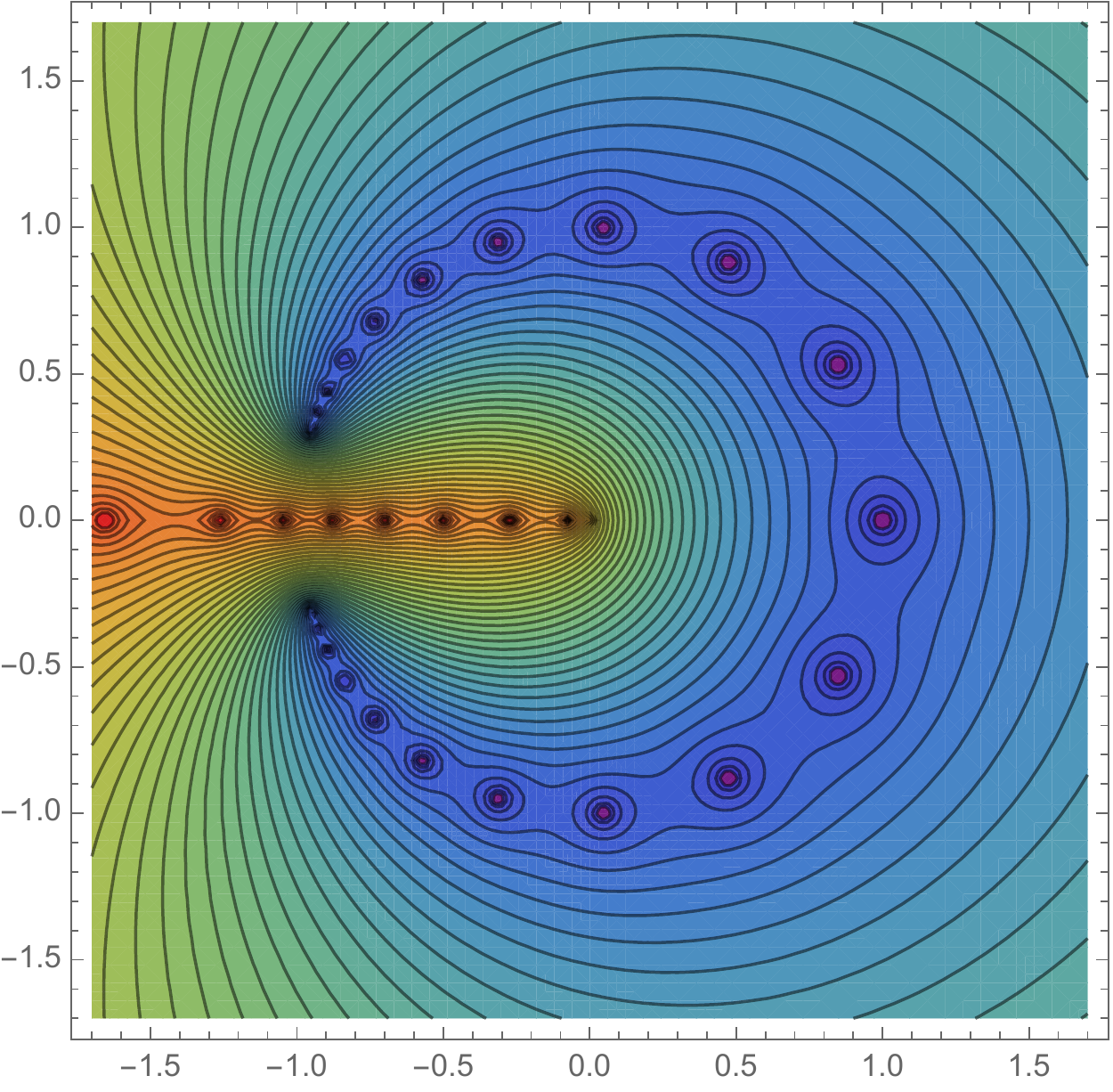}
\includegraphics[scale=0.45]{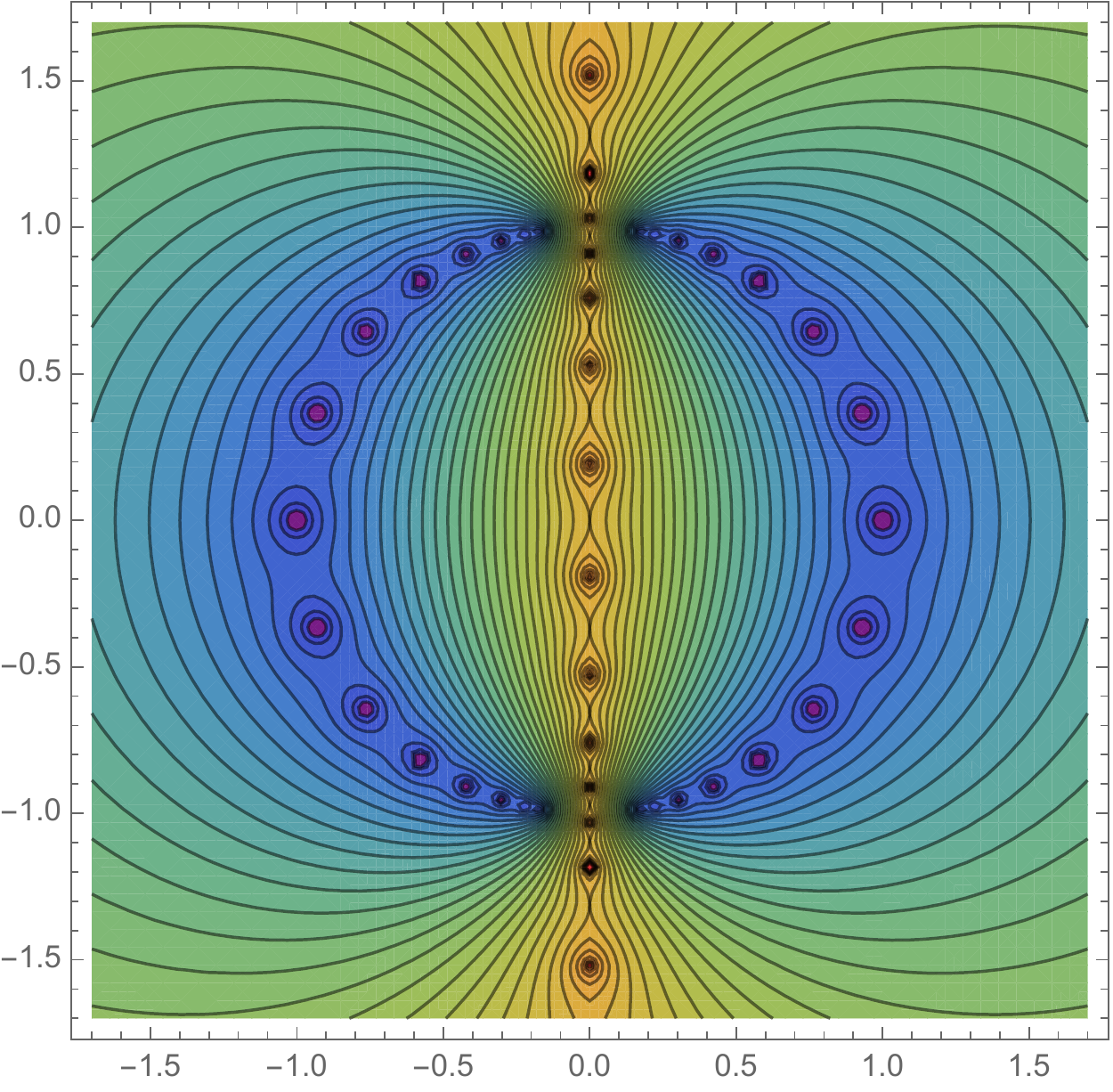}
\caption{Contours of the error $\left| r_n(z;\Theta) - \sqrt{z} \right|$ (left) and $\left| s_m(z;\Theta) - \sign(z) \right|$ (right) in the complex plane with $n=11$, $m=17$, and $\Theta = \frac{\pi}{2}-0.15$. 
The extrema on the unit circle are the zeros of the error, and the extrema on the coordinate axes are poles of the approximants.
}
\label{fig:contour}
\end{figure}

\subsection{Composition}

Next, we show that when two solutions of Problem~\ref{problemZ6} are composed with one another, the resulting function is a solution of Problem~\ref{problemZ6} of higher degree.

\begin{theorem} \label{thm:comp}
Let $\Theta \in (0,\pi/2)$, $m,\widetilde{m} \in \mathbb{N}_0$, and $\widetilde{\Theta} = \left| \arg(s_{m}(e^{i\Theta}; \Theta)) \right|$.  Then
\[
s_{\widetilde{m}}(s_{m}(z;\Theta);\widetilde{\Theta}) = s_{\widetilde{m}m}(z;\Theta).
\]
\end{theorem}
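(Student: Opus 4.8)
The plan is not to manipulate the elliptic representation~(\ref{rationalelliptic}) directly, but to exploit the uniqueness assertion of Theorem~\ref{thm:main}. Set $h(z) := s_{\widetilde{m}}(s_{m}(z;\Theta);\widetilde{\Theta})$. I will show that $h$ is an admissible function for Problem~\ref{problemZ6} with data $(\widetilde{m}m,\Theta)$ whose error equals the optimal value, so that $h$ is a minimizer and hence $h \in \{\, s_{\widetilde{m}m}(\cdot;\Theta),\ s_{\widetilde{m}m}(\cdot;\Theta)^{-1} \,\}$ by Theorem~\ref{thm:main}, and then I will rule out the reciprocal by a single point evaluation. For $k \in \mathbb{N}$ and $\Psi \in (0,\pi/2)$ write $\lambda_{k,\Psi} \in (0,1)$ for the modulus from Theorem~\ref{thm:rationalelliptic} attached to the data $(k,\Psi)$, i.e.\ the root of~(\ref{Kratio}) with $\ell = \cos\Psi$ and $m$ replaced by $k$. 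The cases $m=0$ or $\widetilde{m}=0$ are trivial (one of $s_m$, $s_{\widetilde m}$ is a constant of modulus $1$), so assume $m,\widetilde{m} \ge 1$. Two preliminary identities are needed. First, evaluating~(\ref{rationalelliptic}) at $z=e^{i\Theta}$, where $x = \cos\Theta = \ell$ and $\Im z > 0$, and using $F_m(\ell;\ell) = \lambda_{m,\Theta}$ together with $G_m(\ell;\ell) = \dn(K(\lambda_{m,\Theta}),\lambda_{m,\Theta}) = \sqrt{1-\lambda_{m,\Theta}^2}$, gives $s_m(e^{i\Theta};\Theta) = \lambda_{m,\Theta} + i\sqrt{1-\lambda_{m,\Theta}^2}$; hence $\widetilde{\Theta} = |\arg s_m(e^{i\Theta};\Theta)| = \arccos\lambda_{m,\Theta}$, so $\cos\widetilde{\Theta} = \lambda_{m,\Theta}$ and $\widetilde{\Theta} \in (0,\pi/2)$. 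Second, the defining relation~(\ref{Kratio}) for $\lambda_{\widetilde{m},\widetilde{\Theta}}$ reads $K(\lambda_{m,\Theta})/K(\sqrt{1-\lambda_{m,\Theta}^2}) = K(\lambda_{\widetilde{m},\widetilde{\Theta}})/(\widetilde{m}\,K(\lambda_{\widetilde{m},\widetilde{\Theta}}'))$; multiplying it by the corresponding relation for $\lambda_{m,\Theta}$ yields $K(\ell)/K(\ell') = K(\lambda_{\widetilde{m},\widetilde{\Theta}})/(\widetilde{m}m\,K(\lambda_{\widetilde{m},\widetilde{\Theta}}'))$, and uniqueness of the root in~(\ref{Kratio}) forces $\lambda_{\widetilde{m},\widetilde{\Theta}} = \lambda_{\widetilde{m}m,\Theta}$.

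Next I would check that $h$ is admissible and optimal. As a composition with $s_{\widetilde{m}}(\cdot;\widetilde{\Theta}) \in \mathcal{R}_{\widetilde{m},\widetilde{m}}$ and $s_m(\cdot;\Theta) \in \mathcal{R}_{m,m}$, clearing denominators shows $h \in \mathcal{R}_{\widetilde{m}m,\widetilde{m}m}$, and since $s_m(\cdot;\Theta)$ and $s_{\widetilde{m}}(\cdot;\widetilde{\Theta})$ each map $\{|z|=1\}$ into itself, $|h(z)|=1$ on $|z|=1$. The crucial point is that $s_m(\cdot;\Theta)$ carries $T_\Theta$ into $T_{\widetilde{\Theta}}$ \emph{without changing the sign}: by~(\ref{sequi}), $\arg s_m(e^{i\theta};\Theta) \in [-\arccos\lambda_{m,\Theta},\arccos\lambda_{m,\Theta}] = [-\widetilde{\Theta},\widetilde{\Theta}]$ for $\theta \in [-\Theta,\Theta]$, so the right arc of $T_\Theta$ maps into the right arc of $T_{\widetilde{\Theta}}$ (where $\sign \equiv 1$), while from $-s_m(e^{i\theta};\Theta) = s_m(e^{i(\pi-\theta)};\Theta)^{-1}$ the left arc of $T_\Theta$ maps into the left arc of $T_{\widetilde{\Theta}}$ (where $\sign \equiv -1$). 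Hence $\sign(s_m(z;\Theta)) = \sign(z)$ for $z \in T_\Theta$, so for such $z$
\[
\left| \arg \left( \frac{h(z)}{\sign(z)} \right) \right| = \left| \arg \left( \frac{s_{\widetilde{m}}(s_{m}(z;\Theta);\widetilde{\Theta})}{\sign(s_{m}(z;\Theta))} \right) \right| \le \max_{w \in T_{\widetilde{\Theta}}} \left| \arg \left( \frac{s_{\widetilde{m}}(w;\widetilde{\Theta})}{\sign(w)} \right) \right|,
\]
which by Theorem~\ref{thm:main} and~(\ref{sequi}) equals $\arccos\lambda_{\widetilde{m},\widetilde{\Theta}} = \arccos\lambda_{\widetilde{m}m,\Theta}$, and the latter is precisely the optimal value of Problem~\ref{problemZ6} for $(\widetilde{m}m,\Theta)$. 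Thus $h$ is a minimizer, and Theorem~\ref{thm:main} gives $h = s_{\widetilde{m}m}(\cdot;\Theta)$ or $h = s_{\widetilde{m}m}(\cdot;\Theta)^{-1}$.

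To finish, apply the first preliminary identity twice. It gives $s_m(e^{i\Theta};\Theta) = e^{i\widetilde{\Theta}}$, and since $e^{i\widetilde{\Theta}}$ is the right endpoint of $T_{\widetilde{\Theta}}$ the same computation applied with the data $(\widetilde{m},\widetilde{\Theta})$ yields $h(e^{i\Theta}) = s_{\widetilde{m}}(e^{i\widetilde{\Theta}};\widetilde{\Theta}) = \lambda_{\widetilde{m},\widetilde{\Theta}} + i\sqrt{1-\lambda_{\widetilde{m},\widetilde{\Theta}}^2} = \lambda_{\widetilde{m}m,\Theta} + i\sqrt{1-\lambda_{\widetilde{m}m,\Theta}^2} = s_{\widetilde{m}m}(e^{i\Theta};\Theta)$, using the modulus identity. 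Since $\lambda_{\widetilde{m}m,\Theta} < 1$, this imaginary part is nonzero, whereas $s_{\widetilde{m}m}(e^{i\Theta};\Theta)^{-1} = \lambda_{\widetilde{m}m,\Theta} - i\sqrt{1-\lambda_{\widetilde{m}m,\Theta}^2}$ has the opposite imaginary part; hence $h \ne s_{\widetilde{m}m}(\cdot;\Theta)^{-1}$, and therefore $h = s_{\widetilde{m}m}(\cdot;\Theta)$, as claimed.

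The only steps requiring genuine care are the sign-preservation of $z \mapsto s_m(z;\Theta)$ on $T_\Theta$ — where~(\ref{sequi}) and the reflection identity do the essential work — and the modulus identity $\lambda_{\widetilde{m},\widetilde{\Theta}} = \lambda_{\widetilde{m}m,\Theta}$; both are short. I expect the main pitfall to be the alternative temptation of proving Theorem~\ref{thm:comp} ``by hand'' through~(\ref{rationalelliptic})--(\ref{FGcomp}), i.e.\ establishing the rational identity $F_{\widetilde{m}}(F_m(x;\ell);\lambda_{m,\Theta}) = F_{\widetilde{m}m}(x;\ell)$ and its analogue for $G$: this would entail delicate bookkeeping of the branches of $\sn^{-1}$ hidden in $f_\nu^{-1}$ and of the factors $\sign(\Im z)^m$, and that, rather than anything in the argument above, is where the real obstacle lies. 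The uniqueness route sidesteps it completely.
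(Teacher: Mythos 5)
Your proof is correct, but it follows a genuinely different route from the paper's main proof: the paper establishes the identity directly from the composition laws~(\ref{fcomp})--(\ref{gcomp}) for the elliptic building blocks $f_\nu$ and $g_\nu$, with the main labor going into tracking the signs $\sigma,\tau$ coming from $\sign(\Im z)^m$ and the branches of $\sn^{-1}$ --- exactly the bookkeeping you flag as the pitfall and deliberately avoid. Your argument instead leans on the uniqueness half of Theorem~\ref{thm:main}: you verify that $h=s_{\widetilde m}(s_m(\cdot;\Theta);\widetilde\Theta)$ is admissible of degree $\widetilde m m$ with error at most $\arccos\lambda_{\widetilde m m,\Theta}$ (using the sign-preserving map $T_\Theta\to T_{\widetilde\Theta}$ and the modulus identity $\lambda_{\widetilde m,\widetilde\Theta}=\lambda_{\widetilde m m,\Theta}$, both correctly derived), conclude $h\in\{s_{\widetilde m m},s_{\widetilde m m}^{-1}\}$, and rule out the reciprocal by evaluating at $e^{i\Theta}$. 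This is essentially the alternative proof the authors sketch in the remark immediately following the theorem (they count equioscillations and evaluate at $z=i$; you bound the error directly and evaluate at $z=e^{i\Theta}$ --- both work, and your endpoint computation $s_m(e^{i\Theta};\Theta)=e^{i\arccos\lambda_{m,\Theta}}$ is a nice touch that also identifies $\cos\widetilde\Theta=\lambda_{m,\Theta}$ explicitly, matching the paper's $\widetilde\ell=F_m(\ell;\ell)$). What the paper's main proof buys that yours does not is a self-contained functional identity independent of the optimality/uniqueness machinery, which in particular yields~(\ref{realsigncomp}) for Zolotarev's $F_m$ as a by-product; what yours buys is brevity and immunity from branch-cut errors. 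One small caveat: the case $m=0$ is more degenerate than ``trivial,'' since then $\widetilde\Theta=|\arg(i)|=\pi/2$ falls outside the admissible parameter range $(0,\pi/2)$ for $s_{\widetilde m}(\cdot;\widetilde\Theta)$; this requires a limiting convention (the paper is equally silent on it), though the identity does hold at the single relevant value since $s_{\widetilde m}(i;\Psi)=i$ for all $\Psi$.
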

\begin{proof}
This is essentially a consequence of the identities
\begin{align}
f_{\widetilde{m}\widetilde{\nu}} \circ f_{\widetilde{\nu}}^{-1} \circ f_{m\nu} \circ f_\nu^{-1} &= f_{\widetilde{m}m\nu} \circ f_\nu^{-1}, \label{fcomp} \\
g_{\widetilde{m}\widetilde{\nu}} \circ f_{\widetilde{\nu}}^{-1} \circ f_{m\nu} \circ f_\nu^{-1} &= \pm g_{\widetilde{m}m\nu} \circ f_\nu^{-1}, \label{gcomp}
\end{align}
which hold on $[-1,1]$ whenever 
\begin{equation} \label{nurelation}
\widetilde{\nu} = m\nu.
\end{equation}
(The $\pm$ sign in~(\ref{gcomp}) is $+$ at $x$ if $g_{m\nu}(f_\nu^{-1}(x))^{\widetilde{m}}$ is positive and $-$ at $x$ if  $g_{m\nu}(f_\nu^{-1}(x))^{\widetilde{m}}$ is negative, owing to the branch cut structure of $\sn^{-1}$.)

To flesh out the details, note that~(\ref{nurelation}) holds for $\nu = 1/\mu(\ell)$ and $\widetilde{\nu} = 1/\mu(\widetilde{\ell})$ if and only if
\[
\frac{K(\ell)}{K(\ell')} = \frac{K(\widetilde{\ell})}{mK(\widetilde{\ell}')}.
\]
Comparing with~(\ref{Kratio}), we see that this happens precisely when $\widetilde{\ell} = \lambda = F_m(\ell;\ell)$.  
In turn, this holds if and only if $\ell = \cos\Theta$ and $\widetilde{\ell} = \cos\widetilde{\Theta}$ with $\widetilde{\Theta} = \left| \arg(s_m(e^{i\Theta}; \Theta)) \right|$.

Let us now compute $s_{\widetilde{m}}(s_{m}(z;\Theta);\widetilde{\Theta})$ under the assumption that $\widetilde{\Theta} = \left| \arg(s_{m}(e^{i\Theta}; \Theta)) \right|$.  
Since 
\[
s_m(z;\Theta) = \widetilde{F}_m(z;\Theta) + i \widetilde{G}_m(z;\Theta) = f_{m\nu}(f_\nu^{-1}(x)) + i (\sign \Im z)^m g_{m\nu}(f_\nu^{-1}(x))
\] 
and $s_m(z;\Theta)^{-1} = \widetilde{F}_m(z;\Theta) - i \widetilde{G}_m(z;\Theta)$, we have
\[
\frac{1}{2}(s_{m}(z;\Theta) + s_{m}(z;\Theta)^{-1}) = \widetilde{F}_{m}(z;\Theta) = f_{m\nu}(f_\nu^{-1}(x)), 
\]
where
\[
x = \frac{1}{2}(z+z^{-1}), \quad \nu = \frac{1}{\mu(\ell)}, \quad \ell = \cos\Theta.
\]
Thus, denoting 
\begin{align*}
\widetilde{\nu} &= 1/\mu(\cos\widetilde{\Theta}) = m\nu, \\
\sigma &= (\sign \Im s_{m}(z;\Theta))^{\widetilde{m}} = (\sign\Im z)^{\widetilde{m}m} \sign( g_{m\nu}(f_\nu^{-1}(x)) )^{\widetilde{m}}, \\ 
\tau &= \sign( g_{m\nu}(f_\nu^{-1}(x)) )^{\widetilde{m}},
\end{align*}
we find
\begin{align*}
s_{\widetilde{m}}(s_{m}(z;\Theta);\widetilde{\Theta})
&= f_{\widetilde{m}\widetilde{\nu}}( f_{\widetilde{\nu}}^{-1}(f_{m\nu}(f_\nu^{-1}(x)))) + i \sigma g_{\widetilde{m}\widetilde{\nu}}( f_{\widetilde{\nu}}^{-1}(f_{m\nu}(f_\nu^{-1}(x)))) \\
&= f_{\widetilde{m}m\nu}(f_\nu^{-1}(x)) + i \sigma \tau g_{\widetilde{m}m\nu}(f_\nu^{-1}(x)) \\
&= s_{\widetilde{m}m}(z;\Theta),
\end{align*}
where the last line follows from the fact that $\sigma \tau = (\sign \Im z)^{\widetilde{m}m}$.
\end{proof}

We illustrate Theorem~\ref{thm:comp} 
in Figure~\ref{fig:comp}.
\begin{figure}[htbp]
  \centering
\includegraphics[scale=0.45,trim=0.5in 2.5in 1in 2.5in,clip=true]{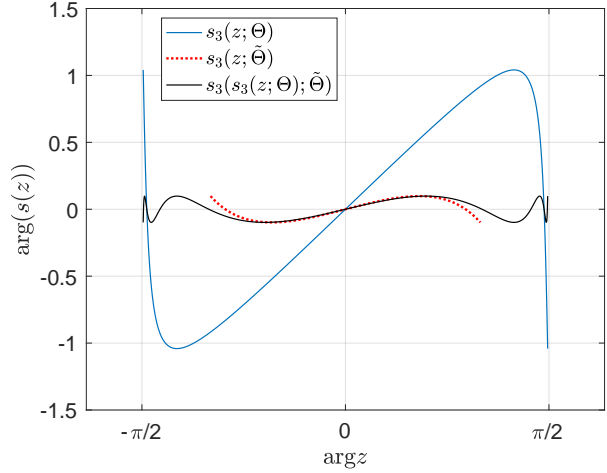}  
  \caption{Illustration of Theorem~\ref{thm:comp}
for $m=\widetilde{m}=3$, $\Theta=\pi/2-0.01$:
$s_{3}(z;\Theta)$, $s_{3}(z;\widetilde{\Theta})$ and 
$s_{3}(s_{3}(z;\Theta);\widetilde{\Theta}) = s_{9}(z;\Theta)$. 
Only $[-\Theta,\Theta]$ is shown; by symmetry 
the plots look the same on $[\pi-\Theta,\pi+\Theta]$.
Composing low-degree solutions results in a high-degree solution.}
  \label{fig:comp}
\end{figure}

\begin{remark}
Theorem~\ref{thm:comp} can also be proved by counting equioscillation points.  As $\theta$ runs from $-\Theta$ to $\Theta$, the number $\widetilde{\theta} := \arg \left( s_{m}(e^{i\theta};\Theta) \right)$ equioscillates $m+1$ times, taking values in $[-\widetilde{\Theta},\widetilde{\Theta}]$ and achieving its extrema at the endpoints.  Each time $\widetilde{\theta}$ runs from $\pm \widetilde{\Theta}$ to $\mp \widetilde{\Theta}$, the number 
\[
\widehat{\theta} := \arg \left( s_{\widetilde{m}}(e^{i\widetilde{\theta}};\widetilde{\Theta}) \right) = \arg \left( s_{\widetilde{m}}(s_{m}(e^{i\theta};\Theta);\widetilde{\Theta}) \right)
\]
equioscillates $\widetilde{m}+1$ times, achieving its extrema at the endpoints.  By counting extrema, we see that as $\theta$ runs from $-\Theta$ to $\Theta$, $\widehat{\theta}$ equioscillates $\widetilde{m} m+1$ times.
Since $s_{\widetilde{m}}(s_{m}(z;\Theta))$ is a rational function of degree $\widetilde{m} m$, we can argue as we did in the proof of Theorem~\ref{thm:main} that $s_{\widetilde{m}}(s_{m}(z;\Theta))$ must be a solution of Problem~\ref{problemZ6}.   Hence, $s_{\widetilde{m}}(s_{m}(z;\Theta);\widetilde{\Theta}) = s_{\widetilde{m}m}(z;\Theta)^\sigma$ for some $\sigma \in \{-1,1\}$.  Evaluating both sides of this equation at $z=i$ yields $\sigma=1$, so $s_{\widetilde{m}}(s_{m}(z;\Theta);\widetilde{\Theta}) = s_{\widetilde{m}m}(z;\Theta)$. 
\end{remark}

\begin{remark}
The identity~(\ref{fcomp}) shows that Zolotarev's (scaled) minimax approximant $F_m(x;\ell)$ of $\sign(x)$ on $[-1,-\ell] \cup [\ell,1]$ satisfies 
\begin{equation} \label{realsigncomp}
F_{\widetilde{m}}(F_{m}(x;\ell); \widetilde{\ell}) = F_{\widetilde{m}m}(x;\ell)
\end{equation}
whenever $\widetilde{\ell} = F_m(\ell;\ell)$.  
This composition law has been studied in, for example,~\cite{mysirev,bogatyrev2010chebyshev,bogatyrev2012rational}.
\end{remark} 
\begin{remark}
It is not hard to check that the function $\widetilde{s}_{2n+1}(z;\Theta) := s_{2n+1}(z;\Theta)^{(-1)^n}$ also behaves nicely under composition:  If $\widetilde{\Theta} = \left| \arg(\widetilde{s}_{2n+1}(e^{i\Theta}; \Theta)) \right|$, then
\[
\widetilde{s}_{2\widetilde{n}+1}(\widetilde{s}_{2n+1}(z;\Theta);\widetilde{\Theta}) = \widetilde{s}_{(2\widetilde{n}+1)(2n+1)}(z;\Theta).
\]
\end{remark}

Since
\[
\widetilde{s}_{2n+1}(z;\Theta) = \frac{z}{r_n(z^2; \Theta)},
\]
we obtain from Theorem~\ref{thm:comp} an analogous composition law for solutions of Problem~\ref{problemZ5}.

\begin{corollary}
Let $\Theta \in (0,\pi/2)$, $\widetilde{n},n \in \mathbb{N}_0$, and $\widetilde{\Theta} = \left| \arg(s_{2n+1}(e^{i\Theta}; \Theta)) \right| = \left|\arg\left( e^{i\Theta}/r_n(e^{2i\Theta};\Theta) \right)\right|$.  Then
\begin{equation} \label{sqrtcomp}
r_n(z;\Theta) r_{\widetilde{n}}\left( \frac{z}{r_n(z;\Theta)^2}; \widetilde{\Theta} \right) = r_{2\widetilde{n}n+\widetilde{n}+n}(z;\Theta).
\end{equation}
\end{corollary}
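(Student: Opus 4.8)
The plan is to derive~(\ref{sqrtcomp}) directly from Theorem~\ref{thm:comp} by using the dictionary between solutions of Problem~\ref{problemZ6} of odd degree and solutions of Problem~\ref{problemZ5} provided by Remark~\ref{remark:oddm}, specifically the identity $s_{2n+1}(z;\Theta)^{(-1)^n} = z/r_n(z^2;\Theta)$. Throughout I abbreviate $\widetilde{s}_k(z;\Theta) := s_k(z;\Theta)^{(-1)^{(k-1)/2}}$ for odd $k$, so that $\widetilde{s}_{2n+1}(z;\Theta) = z/r_n(z^2;\Theta)$. The first step is to record (as already noted in the third remark after Theorem~\ref{thm:comp}) that $\widetilde{s}$ also composes cleanly: if $\widetilde{\Theta} = |\arg(\widetilde{s}_{2n+1}(e^{i\Theta};\Theta))|$, then $\widetilde{s}_{2\widetilde{n}+1}(\widetilde{s}_{2n+1}(z;\Theta);\widetilde{\Theta}) = \widetilde{s}_{(2\widetilde{n}+1)(2n+1)}(z;\Theta)$. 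I would obtain this from Theorem~\ref{thm:comp} by checking the parity bookkeeping: the sign $(-1)^{(k-1)/2}$ picked up in passing between $s$ and $\widetilde{s}$ on the inside and outside of the composition, together with the sign picked up when $(2\widetilde{n}+1)(2n+1)$ is written in the form $2N+1$, all cancel. Equivalently, one observes that $\widetilde{s}$ has the clean representation $\widetilde{s}_{2n+1}(z;\Theta) = f_{(2n+1)\nu}(f_\nu^{-1}(x)) + i(\sign\Im z)g_{(2n+1)\nu}(f_\nu^{-1}(x))$ with $x = \tfrac12(z+z^{-1})$ — i.e.\ without the $m$-dependent exponent on $\sign\Im z$ — so the composition argument in the proof of Theorem~\ref{thm:comp} applies verbatim with all $\pm$ ambiguities resolved to $+$.

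The second step is to substitute $\widetilde{s}_{2n+1}(z;\Theta) = z/r_n(z^2;\Theta)$ into this composition law. Writing $w = z^2$ (so $w \in S_\Theta \iff z \in T_\Theta$, as in~(\ref{samearg})), set $W := \widetilde{s}_{2n+1}(z;\Theta)^2 = w/r_n(w;\Theta)^2$. Then the outer evaluation is
\[
\widetilde{s}_{2\widetilde{n}+1}(\widetilde{s}_{2n+1}(z;\Theta);\widetilde{\Theta}) = \frac{\widetilde{s}_{2n+1}(z;\Theta)}{r_{\widetilde{n}}(W;\widetilde{\Theta})} = \frac{z}{r_n(w;\Theta)\, r_{\widetilde{n}}\!\left( \dfrac{w}{r_n(w;\Theta)^2}; \widetilde{\Theta}\right)},
\]
while the right-hand side of the composition law is $\widetilde{s}_{(2\widetilde{n}+1)(2n+1)}(z;\Theta) = z / r_N(z^2;\Theta)$, where $2N+1 = (2\widetilde{n}+1)(2n+1)$, i.e.\ $N = 2\widetilde{n}n + \widetilde{n} + n$. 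Cancelling the factor of $z$ and replacing $w$ by $z$ yields exactly~(\ref{sqrtcomp}). The only remaining point is the identification of $\widetilde{\Theta}$: since $\widetilde{s}_{2n+1}$ and $s_{2n+1}$ differ by $z \mapsto z^{-1}$ at most, $|\arg \widetilde{s}_{2n+1}(e^{i\Theta};\Theta)| = |\arg s_{2n+1}(e^{i\Theta};\Theta)|$, and by~(\ref{signapprox_ratio}) evaluated at $z = e^{i\Theta}$ this equals $|\arg(e^{i\Theta}/r_n(e^{2i\Theta};\Theta))|$, matching the hypothesis of the corollary.

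The main obstacle I anticipate is the sign/parity bookkeeping in the first step — verifying that $\widetilde{s}$ composes \emph{without} any sign ambiguity. One has to be careful that $(-1)^{(k-1)/2}$ is not multiplicative in $k$ over odd integers in the naive way: $(-1)^{(ab-1)/2}$ versus $(-1)^{(a-1)/2}(-1)^{(b-1)/2}$ for odd $a,b$. A quick check using $ab - 1 = (a-1)(b-1) + (a-1) + (b-1)$ with $a-1, b-1$ even shows $(ab-1)/2 \equiv (a-1)/2 + (b-1)/2 \pmod 2$, so in fact it \emph{is} additive in the exponent mod $2$, and the signs do cancel; but this needs to be stated rather than assumed. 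An alternative that sidesteps the issue entirely is to prove~(\ref{sqrtcomp}) from scratch by the equioscillation-counting argument used in the last remark before the corollary: show $r_n(z;\Theta)\,r_{\widetilde{n}}(z/r_n(z;\Theta)^2;\widetilde{\Theta})$ is a type $(N,N)$ rational function with unit modulus on $|z|=1$ whose argument relative to $\sqrt{z}$ equioscillates $2N+2$ times on $S_\Theta$, hence by the uniqueness in Theorem~\ref{thm:main} equals $r_N(z;\Theta)^{\pm 1}$, and fix the sign by evaluating at a convenient point such as $z=1$. I would present the short derivation from Theorem~\ref{thm:comp} as the main proof and relegate this second route to a parenthetical remark.
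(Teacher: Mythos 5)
Your proof is correct and follows essentially the same route as the paper's: both substitute $\widetilde{s}_{2n+1}(z;\Theta)=z/r_n(z^2;\Theta)$ into the composition law $\widetilde{s}_{2\widetilde{n}+1}\circ\widetilde{s}_{2n+1}=\widetilde{s}_{(2\widetilde{n}+1)(2n+1)}$ from the preceding remark and rearrange (the paper carries out the computation in the variable $\sqrt{z}$ rather than $w=z^2$, which is immaterial), and your parity check $(ab-1)/2\equiv(a-1)/2+(b-1)/2\pmod 2$ correctly fills in the ``not hard to check'' remark. One small caveat: your parenthetical ``clean representation'' $\widetilde{s}_{2n+1}=F+i(\sign\Im z)G$ holds only for even $n$ (for odd $n$ the sign on $G$ flips, since $\widetilde{s}_{2n+1}=s_{2n+1}^{-1}$ there), but your primary exponent-bookkeeping argument does not rely on it.
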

\begin{remark}
This behavior closely parallels the behavior of rational minimax approximants of $\sqrt{x}$ on positive real intervals; see~\cite{gawlik2018zolotarev,gawlik2018pth}. 
\end{remark}
\begin{proof}
We have
\begin{align*}
\frac{\sqrt{z}}{r_{2\widetilde{n}n+\widetilde{n}+n}(z;\Theta)}
&= \widetilde{s}_{4\widetilde{n}n+2\widetilde{n}+2n+1}(\sqrt{z};\Theta) \\
&= \widetilde{s}_{2\widetilde{n}+1}(\widetilde{s}_{2n+1}(\sqrt{z};\Theta); \widetilde{\Theta}) \\
&= \widetilde{s}_{2\widetilde{n}+1}\left( \frac{\sqrt{z}}{r_n(z;\Theta)}; \widetilde{\Theta} \right) \\
&= \frac{ \sqrt{z} / r_n(z;\Theta) }{ r_{\widetilde{n}}\left( z/r_n(z;\Theta)^2; \widetilde{\Theta} \right)}.
\end{align*}
Rearranging this yields~(\ref{sqrtcomp}).
\end{proof}

\subsection{Connections with other functions}

We conclude this section by drawing a few connections between the solutions to Problems~\ref{problemZ5}-\ref{problemZ6} and other well-studied functions.

\paragraph{\textbf{Finite Blaschke products}}
Ng and Tsang~\cite{ng2013polynomials,ng2015chebyshev} study a finite Blaschke product that behaves nicely under composition and solves the extremal problem~(\ref{zolonumber}) for $Z_m(E,F)$ with $E=[-\sqrt{\ell},\sqrt{\ell}]$ and $F=(-\infty,-\frac{1}{\sqrt{\ell}}] \cup [\frac{1}{\sqrt{\ell}},\infty)$.  The function is
\[
h_m(z;\ell) = \prod_{j=1}^m \frac{z-c_j}{1 - c_j z},
\]
where
\[
c_j = \frac{\sqrt{\ell} \cn\left( \frac{2j-1}{m}K(\ell), \ell \right)  }{ \dn\left( \frac{2j-1}{m}K(\ell), \ell \right) }.
\]
They show that if $\widetilde{\ell} = Z_m\big([-\sqrt{\ell},\sqrt{\ell}],(-\infty,-\frac{1}{\sqrt{\ell}}] \cup [\frac{1}{\sqrt{\ell}},\infty)\big)$, then~\cite[Proposition 2]{ng2013polynomials}
\[
h_{\widetilde{m}}(h_m(z;\ell);\widetilde{\ell}) = h_{\widetilde{m}m}(z;\ell)
\]
for any positive integers $\widetilde{m}$ and $m$, and~\cite[Proposition 4.1(b)]{ng2015chebyshev}
\[
\left(\frac{1-\widetilde{\ell}}{1+\widetilde{\ell}}\right) \frac{h_m(z;\ell)-1}{h_m(z;\ell)+1} = \frac{2}{1+F_m(\kappa;\kappa)} F_m(x;\kappa), 
\]
where
\[
x = \kappa \left(\frac{1+\sqrt{\ell}}{1-\sqrt{\ell}}\right) \frac{z-1}{z+1}, \quad \kappa = \left( \frac{1-\sqrt{\ell}}{1+\sqrt{\ell}} \right)^2.
\]
Our function $s_m$ is thus related to theirs via
\[
\left(\frac{1-\widetilde{\ell}}{1+\widetilde{\ell}}\right) \frac{h_m(z;\ell)-1}{h_m(z;\ell)+1} = \frac{1}{1+F_m(\kappa;\kappa)}\left( s_m(w;\Phi) + s_m(w;\Phi)^{-1} \right),
\]
where
\[
\frac{1}{2}(w+w^{-1}) = \left(\frac{1-\sqrt{\ell}}{1+\sqrt{\ell}}\right) \frac{z-1}{z+1}, \quad \cos\Phi = \left( \frac{1-\sqrt{\ell}}{1+\sqrt{\ell}} \right)^2 = \kappa.
\]

\paragraph{\textbf{Pad\'e approximants}}
In the limit as $\Theta \downarrow 0$, the solution to Problem~\ref{problemZ5} reduces to a Pad\'e approximant of $\sqrt{z}$.  
More precisely, let $p_n(z)$ denote the type-$(n,n)$ Pad\'e approximant to $\sqrt{z}$ at $z=1$.  An explicit formula for $p_n(z)$ is~\cite[p. 707]{gawlik2018zolotarev}
\[
p_n(z) = \sqrt{z} \frac{(1+\sqrt{z})^{2n+1} + (1-\sqrt{z})^{2n+1}}{(1+\sqrt{z})^{2n+1} - (1-\sqrt{z})^{2n+1}}.
\]
We say that a parametrized family of rational functions $r(z;\Theta)$ converges coefficientwise to $p_n(z)$ as $\Theta \downarrow 0$ if the coefficients in the numerator and denominator of $r(z;\Theta)$, appropriately normalized, converge to those of $p_n(z)$ as $\Theta \downarrow 0$.
\begin{proposition}
Let $n \in \mathbb{N}_0$.  As $\Theta \downarrow 0$, $r_n(z;\Theta)$ converges coefficientwise to $p_n(z)$.
\end{proposition}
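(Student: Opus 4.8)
The plan is to pass to the limit $\Theta\downarrow0$ directly in the product formula \eqref{sqrtapprox_ratio}--\eqref{aj}, identify the limiting function with $p_n$ by matching its zeros and poles, and then invoke continuity of the coefficients. (The case $n=0$ is trivial, since $r_0\equiv1\equiv p_0$, so assume $n\ge1$.)

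First I would compute $\lim_{\Theta\downarrow0}a_j$. As $\Theta\downarrow0$ we have $\ell=\cos\Theta\to1$ and $\ell'=\sin\Theta\to0$, so $K(\ell')\to K(0)=\tfrac{\pi}{2}$ and, by the standard degeneration of Jacobi's elliptic functions as the modulus tends to $0$ (uniform on bounded sets of the argument), $\sn(u,\ell')\to\sin u$, $\cn(u,\ell')\to\cos u$, $\dn(u,\ell')\to1$. With $\theta_j:=\tfrac{(2j-1)\pi}{2(2n+1)}\in(0,\pi/2)$ we have $\tfrac{2j-1}{2n+1}K(\ell')\to\theta_j$, whence
\[
a_j\ \longrightarrow\ \alpha_j:=\left(\frac{1+\sin\theta_j}{\cos\theta_j}\right)^{2(-1)^{j+n}}=t_j^{2(-1)^{j+n}},\qquad t_j:=\tan\!\Big(\tfrac{\pi}{4}+\tfrac{\theta_j}{2}\Big)=\tan\!\Big(\tfrac{(n+j)\pi}{2(2n+1)}\Big),
\]
using $\tfrac{1+\sin\theta}{\cos\theta}=\tan(\tfrac{\pi}{4}+\tfrac{\theta}{2})$; since $\theta_j\in(0,\pi/2)$ one has $t_j\in(1,\infty)$, so every $\alpha_j$ is finite and nonzero. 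The coefficients of the numerator $\prod_j(1+a_jz)$ and of the monic denominator $\prod_j(z+a_j)$ of $r_n(z;\Theta)$ are elementary symmetric functions of $a_1,\dots,a_n$, hence continuous in the $a_j$; clearing $\sqrt z$ shows $p_n=E/O$ with $E(z)=\sum_{i=0}^n\binom{2n+1}{2i}z^i$ and $O(z)=\sum_{i=0}^n\binom{2n+1}{2i+1}z^i$ monic, so this is the same normalization. Thus the proposition reduces to the identity of rational functions
\[
\prod_{j=1}^n\frac{1+\alpha_j z}{z+\alpha_j}=p_n(z).
\]

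To prove this I would match zeros and poles. From the $w=\sqrt z$ representation, the zeros and poles of $p_n$ are governed by $\big(\tfrac{1+w}{1-w}\big)^{2n+1}=-1$ and by $\big(\tfrac{1+w}{1-w}\big)^{2n+1}=1$ with $w\ne0$, respectively; solving $\tfrac{1+w}{1-w}=e^{i\phi}$ gives $w=i\tan(\phi/2)$, so the zeros of $p_n$ are $z=-\tan^2\theta_j$, $j=1,\dots,n$, and the poles are $z=-\tan^2\!\big(\tfrac{k\pi}{2n+1}\big)$, $k=1,\dots,n$; since $\tan^2$ is injective on $(0,\pi/2)$ these $2n$ numbers are distinct, so $p_n$ has exact type $(n,n)$. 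On the other hand $\prod_j\tfrac{1+\alpha_j z}{z+\alpha_j}$ has zeros $z=-1/\alpha_j=-t_j^{-2(-1)^{j+n}}$ and poles $z=-\alpha_j=-t_j^{2(-1)^{j+n}}$. Using the complementary-angle identity $t_j^{-2}=\cot^2\!\big(\tfrac{(n+j)\pi}{2(2n+1)}\big)=\tan^2\!\big(\tfrac{(n+1-j)\pi}{2(2n+1)}\big)$ together with the injectivity of $\tan^2$ on $(0,\pi/2)$---legitimate because $1\le n+1-j\le n<2n+1$ and $n+1\le n+j\le 2n<2n+1$---the required equalities of zero-sets and of pole-sets become the two multiset identities over $j\in\{1,\dots,n\}$,
\[
\{\,n{+}j:\ j{+}n\ \text{odd}\,\}\cup\{\,n{+}1{-}j:\ j{+}n\ \text{even}\,\}=\{1,3,\dots,2n{-}1\},
\]
\[
\{\,n{+}j:\ j{+}n\ \text{even}\,\}\cup\{\,n{+}1{-}j:\ j{+}n\ \text{odd}\,\}=\{2,4,\dots,2n\},
\]
each of which follows from a short case distinction on the parity of $n$. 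Having matched all $n$ zeros and all $n$ poles, the two sides of the displayed identity agree up to a multiplicative constant; evaluating at $z=1$, where $\prod_j\tfrac{1+\alpha_j}{1+\alpha_j}=1=p_n(1)$, forces the constant to be $1$. Coefficientwise convergence then follows from the continuity noted above.

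The elliptic-to-trigonometric degeneration and the continuity of the coefficient map are routine. I expect the main obstacle to be the middle step: extracting the zeros and poles of $p_n$ in closed trigonometric form and then carrying out the parity bookkeeping that matches them---against the two complementary ``halves'' of $\{1,\dots,n\}$ picked out by the exponent sign $(-1)^{j+n}$ in \eqref{aj}---to the zeros $-1/\alpha_j$ and poles $-\alpha_j$ of the limiting product.
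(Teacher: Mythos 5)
Your proposal is correct and follows essentially the same route as the paper: pass to the limit in $a_j$ via the degeneration of the elliptic functions, simplify with $\tfrac{1+\sin\theta}{\cos\theta}=\tan(\tfrac{\pi}{4}+\tfrac{\theta}{2})$, and match the resulting tangent-squared values to the poles of $p_n$ by the same parity bookkeeping. The only difference is cosmetic: the paper invokes $|r_n|=|p_n|=1$ on the unit circle to reduce the matching to poles alone, whereas you match zeros and poles separately and pin down the constant at $z=1$, which is slightly longer but equally valid.
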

\begin{proof}
Since $|r_n(z;\Theta)|=|p_n(z)|=1$ for all $z$ with $|z|=1$, it suffices to show that the poles of $r_n(z;\Theta)$ approach the poles of $p_n(z)$ as $\Theta \downarrow 0$.  It is easy to check that the poles of $p_n(z)$ are $\left\{ -\tan^2\left(\frac{j\pi}{2n+1}\right) \right\}_{j=1}^n$.  On the other hand, the poles of $r_n(z;\Theta)$ are $\{-a_j\}_{j=1}^n$.  Since $\lim_{\Theta\downarrow 0} K(\Theta) = K(0)= \pi/2$, $\lim_{\ell'\downarrow 0} \sn(z,\ell') =\sn(z,0)=\sin z$, $\lim_{\ell' \downarrow 0} \cn(z;\ell') =\cn(z,0)=\cos z$, and $\lim_{\ell' \downarrow 0} \dn(z,\ell') = \dn(z,0)=1$~\cite[Table 22.5.3]{NIST}, we have
\[
\lim_{\Theta \downarrow 0} a_j = \left( \frac{ \sin(2j-1)\omega + 1 }{ \cos(2j-1)\omega } \right)^{2(-1)^{j+n}},
\]
where $\omega = \pi/(4n+2)$.
Using the identities $\frac{\sin\theta+1}{\cos\theta} = \cot\left(\frac{\pi}{4}-\frac{\theta}{2}\right)$ and $\cot\left(\frac{\pi}{2}-\theta\right)=\tan\theta$, this can be simplified to
\begin{align*}
\lim_{\Theta \downarrow 0} a_j
&= \left( \cot (n-j+1)\omega \right)^{2(-1)^{j+n}} \\
&=
\begin{cases}
\tan^2(n-j+1)\omega, &\mbox{ if $j+n$ is odd}, \\
\tan^2(n+j)\omega, &\mbox{ if $j+n$ is even}.
\end{cases}
\end{align*}
This shows that $\{ \lim_{\Theta \downarrow 0} a_j \}_{j=1}^n$ contains the squared tangent of every even multiple of $\omega$.  Hence,
\[
\{ -\lim_{\Theta \downarrow 0} a_j \}_{j=1}^n = \left\{ -\tan^2\left(\frac{j\pi}{2n+1}\right) \right\}_{j=1}^n.
\]
\end{proof}

\paragraph{\textbf{Chebyshev polynomials}}
It is interesting to note the similarity between the results in this paper and the defining property of the Chebyshev polynomials of the first kind $T_n(x)$:
\[
\Re(z^n) = T_n(\Re z), \quad \text{ if } |z|=1.
\]
In fact, we can write Theorem~\ref{thm:rationalelliptic} in a more suggestive way by denoting 
\begin{align*}
\mathfrak{F}_m \colon [-1,1] \times (0,1) &\rightarrow [-1,1] \times (0,1), \\
(x,\ell) &\mapsto \big( F_m(x;\ell), F_m(\ell;\ell) \big),
\end{align*}
\begin{align*}
\mathfrak{s}_m \colon \mathbb{S} \times \mathbb{S}_+ &\rightarrow \mathbb{S} \times \mathbb{S}_+, \\
(z,\zeta) &\mapsto \big( s_m(z;|\arg\zeta|), s_m(\zeta;|\arg\zeta|) \big),
\end{align*}
and
\begin{align*}
J \colon \mathbb{S} \times \mathbb{S}_+ &\rightarrow [-1,1] \times (0,1), \\
(z,\zeta) &\mapsto ( \Re{z}, \Re{\zeta} ),
\end{align*}
where $\mathbb{S}=\{z \in \mathbb{C} \mid |z|=1\}$ and $\mathbb{S}_+=\{z \in \mathbb{S} \mid 0 < \Re z < 1\}$. 
With this notation, Theorem~\ref{thm:rationalelliptic} says that
\begin{equation} \label{FJJf}
\mathfrak{F}_m \circ J = J \circ \mathfrak{s}_m,
\end{equation}
and Theorem~\ref{thm:comp} says that
\begin{equation} \label{ff}
\mathfrak{s}_{\widetilde{m}} \circ \mathfrak{s}_m = \mathfrak{s}_{\widetilde{m}m}.
\end{equation}
By combining~(\ref{FJJf}) with~(\ref{ff}), we deduce
\begin{equation} \label{FF}
\mathfrak{F}_{\widetilde{m}} \circ \mathfrak{F}_m = \mathfrak{F}_{\widetilde{m}m},
\end{equation}
which is a restatement of~(\ref{realsigncomp}).
The identities~(\ref{FJJf}-\ref{FF}) mimic the following identities involving the monomials $t_n(z)=z^n$ and the Chebyshev polynomials $T_n(x)$:
\[
\left. T_n \circ \Re{} \right|_{\mathbb{S}} = \left. \Re{} \circ t_n\right|_{\mathbb{S}}, \quad t_m \circ t_n = t_{mn}, \quad T_m \circ T_n = T_{mn}.
\]

\bibliographystyle{amsplain2}
\bibliography{bib2}

\end{document}